\newtheorem{theorem}{Theorem}[section]
\theoremstyle{plain}
\newtheorem{corollary}[theorem]{Corollary}
\newtheorem{example}[theorem]{Example}
\newtheorem{proposition}[theorem]{Proposition}
\newtheorem{remark}[theorem]{Remark}
\numberwithin{equation}{section}
\newcommand{\jbar}{\bar{j}}
\newcommand{\kbar}{\bar{k}}
\newcommand{\lbar}{\bar{l}}
\newcommand{\zbar}{\bar{z}}
\newcommand{\fbar}{\bar{f}}
\newcommand{\wbar}{\bar{w}}
\newcommand{\abar}{\bar{\alpha}}
\newcommand{\bbar}{\bar{\beta}}
\newcommand{\sbar}{\bar{\sigma}}
\newcommand{\gamabar}{\bar{\gamma}}
\DeclareMathOperator{\trace}{trace}
\begin{document}
\title[The Webster scalar curvature and sharp upper and lower bounds]{The Webster scalar curvature and sharp upper and lower bounds for the first positive eigenvalue of the Kohn-Laplacian on real hypersurfaces}

\author{Song-Ying Li}
\address{Department of Mathematics, University of California, Irvine, CA 92697-3875 and Department of Math, Fujian Normal University, Fuzhou, Fujian 350108}
\email{sli@math.uci.edu}
\author{Duong Ngoc Son}
\address{Fakult\"{a}t f\"{u}r Mathematik, Universit\"{a}t Wien, Oskar-Morgenstern-Platz 1, 1090 Wien, Austria}
\email{son.duong@univie.ac.at}
\thanks{2000 {\em Mathematics Subject Classification}. 32V20, 32W10}
\thanks{\emph{Key words and phrases:} eigenvalue, Kohn-Laplacian}
\thanks{The second-named author was supported by the Austrian Science Fund FWF, Project I01776.}

\date{March 20, 2018}
\begin{abstract}
	Let $(M,\theta)$ be a compact strictly pseudoconvex pseudohermitian manifold which is CR embedded into a complex space.
	In an earlier paper, Lin and the authors gave several sharp upper bounds for the first positive 
	eigenvalue $\lambda_1$ of the Kohn-Laplacian $\Box_b$ on $(M,\theta)$. In the present paper, we give a sharp upper bound for $\lambda_1$, generalizing and extending some previous results. As a corollary, we obtain a Reilly-type estimate when $M$ is embedded into the standard sphere. In another direction, using a Lichnerowicz-type estimate by Chanillo, Chiu, and Yang and an explicit formula for the Webster scalar curvature, we give a lower bound for $\lambda_1$ when the pseudohermitian structure $\theta$ is volume-normalized.
\end{abstract}
\maketitle

\section{Introduction}
In recent years, there has been much effort devoted to the study of the spectrum of the Kohn-Laplacian $\Box_b$ on \emph{compact} strictly pseudoconvex CR manifolds; see e.g. \cite{CCY,LSW,DLL}. It is proved by Burns-Epstein \cite{BE} for the three-dimensional case that the spectrum of $\Box_b$ in $(0, \infty)$ consists of point eigenvalues of finite multiplicity and the corresponding eigenfunctions are smooth (the higher dimensional case is well-known and even simpler; see e.g. \cite{LSW}). Moreover, zero is an isolated eigenvalue if and only if $\Box_b$ has closed range. By the work of Kohn \cite{Kohn}, the latter condition is satisfied if $M$ is embeddable in a complex space. Furthermore, this embedability condition holds if the manifold is compact and of dimension at least five \cite{BdM}. Thus, assume that $M$ is embedded, the spectrum of $\Box_b$ consists of zero and a sequence of point eigenvalues $0< \lambda_1 < \lambda_2 < \cdots < \cdots $ increasing to the infinity. In this situation, one may study the basic problem of estimating the first positive eigenvalue $\lambda_1$ on a compact embeddable strictly pseudoconvex pseudohermitian manifold. In a recent paper \cite{DLL}, Guijuan Lin and the authors gave several sharp and explicit upper bounds for $\lambda_1$ on compact strictly pseudoconvex real hypersurfaces of $\mathbb{C}^{n+1}$. It is proved in \cite[Theorem 1.2]{DLL} that when $M$ is defined by a \emph{strictly} plurisubharmonic defining function $\rho$ which satisfies a certain condition, and $\theta : = (i/2)(\bar{\partial}\rho - \partial \rho)$, then $\lambda_1$ is bounded above by the average value on $M$ of $|\partial\rho|^{-2}_{\rho}$, the inverse of the square of the length of $\partial \rho$ in the K\"ahler metric induced by $\rho$ near $M$. In Theorem 4.1 of the same paper, another upper bound is given in terms of $|\partial\rho|^{-2}_{\rho}$ and the eigenvalues of the complex Hessian $H[\rho]$ of $\rho$. The first purpose of the present paper is to continue studying the upper bounds for $\lambda_1$, explicitly in terms of $\rho$. Namely, we shall prove that a similar bound still holds when $\rho$ satisfies a more general condition given below. We also relate the bound with the \emph{transverse curvature} of $M$ induced by $\rho$. To describe our results in detail, let us first fix some notations. Let $M\subset \mathbb{C}^{n+1}$ be a strictly pseudoconvex real hypersurface and $\rho$ a defining function for $M$, i.e., \(M:=\{Z\in \mathbb{C}^{n+1} \colon \rho(Z)=0\}\), with $d\rho \ne 0$ along $M$. The restriction of $ i \partial\bar{\partial} \rho$ to $T^{1,0}M\times T^{0,1}M$ is definite and hence assumed to be positive. There exists a unique $(1,0)$ vector field $\xi$ near $M$ and a real function $r$ such that \cite{LM82}
\begin{equation}
\xi \lrcorner\, \partial\bar{\partial} \rho = r[\rho] \, \bar{\partial} \rho,
\quad
\partial\rho(\xi) = 1.
\end{equation}
The function $r=r[\rho]$ is called \emph{transverse curvature} by Graham and Lee \cite{GL, LM82}. The readers can check that if $\rho$ is strictly plurisubharmonic, then $r[\rho] = |\partial\rho|^{-2}_{\rho}$. This function will play an important role in this paper.

On the other hand, the defining function $\rho$ ``induces'' a pseudohermitian structure $\theta$ on $M$ by
\begin{equation}
\theta = \frac{i}{2}(\bar{\partial}\rho - \partial \rho).
\end{equation}
Thus $(M,\theta)$ is a pseudohermitian manifold in the sense of Webster \cite{Webster}. Since $M$ is strictly pseudoconvex, $dV: = \theta \wedge (d\theta)^n$ is a volume form on $M$. This is the volume form we use to define the adjoint $\bar{\partial}^{\ast}_b$ of the $\bar{\partial}_b$ operator. The Kohn-Laplacian acting on function is thus $\Box_b:= \bar{\partial}_b ^{\ast} \bar{\partial}_b$.

The first result in this paper is the following theorem.
\begin{theorem}\label{thm:ub}
	Let $M$ be a compact strictly pseudoconvex hypersurface in $\mathbb{C}^{n+1}$ defined by $\rho = 0$ and let $\theta = \iota^{\ast} (i/2)(\bar{\partial} \rho - \partial\rho)$. 
	Suppose that there are positive numbers $N>0$, $\nu>0$, and a pluriharmonic function $\psi$ defined in a neighborhood of $M$ such that
	\begin{equation}\label{e:13}
	(\rho + \nu)^N - \psi = \sum_{\mu = 1}^K |f^{(\mu)}|^2.
	\end{equation}
	where $f^{\mu}$ are holomorphic for $\mu = 1,2,\dots , K$. Then
	\begin{equation}\label{e:1.4}
	\lambda_1(M,\theta) \leq \frac{n}{v(M,\theta)} \int_M r[\rho]\, \theta \wedge (d\theta)^n + \frac{n(N-1)}{\nu}.
	\end{equation}
	If the equality occurs, then the functions $b^{\mu}:=\Box_b \fbar^{(\mu)}$ are eigenfunctions that correspond to~$\lambda_1$.
\end{theorem}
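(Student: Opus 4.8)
The plan is to bound $\lambda_1$ by a Rayleigh quotient built from the conjugates $\bar f^{(\mu)}$. Recall that $\ker\Box_b$ consists exactly of the CR functions, so the variational characterization gives, for any finite collection of test functions $u^{(\mu)}$ all orthogonal to $\ker\Box_b$,
\[
\lambda_1 \le \frac{\sum_\mu \|\bar\partial_b u^{(\mu)}\|^2}{\sum_\mu \|u^{(\mu)}\|^2}.
\]
I would take $u^{(\mu)} := \bar f^{(\mu)} - \Pi \bar f^{(\mu)}$, where $\Pi$ is the Szegő projection onto CR functions. Since $\bar\partial_b$ annihilates CR functions, $\bar\partial_b u^{(\mu)} = \bar\partial_b \bar f^{(\mu)}$ and $\Box_b u^{(\mu)} = \Box_b \bar f^{(\mu)} = b^{(\mu)}$; this is what makes the $b^{(\mu)}$ appear in the equality discussion.

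The two geometric quantities in \eqref{e:1.4} should come out of differentiating \eqref{e:13}. Applying $i\partial\bar\partial$ to \eqref{e:13} and using that $\psi$ is pluriharmonic gives, on $M$,
\[
\sum_\mu \partial f^{(\mu)} \wedge \overline{\partial f^{(\mu)}} = N(N-1)\nu^{N-2}\,\partial\rho\wedge\bar\partial\rho + N\nu^{N-1}\,\partial\bar\partial\rho .
\]
Restricting this $(1,1)$-form to the Levi distribution $T^{1,0}M$ (where $\partial\rho$ vanishes, so the first term drops) and tracing with respect to the Levi metric yields $\sum_\mu |\partial_b f^{(\mu)}|^2 = nN\nu^{N-1}$ pointwise; since $|\bar\partial_b\bar f^{(\mu)}|=|\partial_b f^{(\mu)}|$, integrating shows the numerator above equals $nN\nu^{N-1}\,v(M,\theta)$. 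Evaluating the same identity on the transverse field $\xi$, using $\partial\rho(\xi)=1$ and $\xi\lrcorner\partial\bar\partial\rho = r[\rho]\,\bar\partial\rho$, gives instead the pointwise identity $\sum_\mu |\xi f^{(\mu)}|^2 = N\nu^{N-1}\big(r[\rho] + (N-1)/\nu\big)$. Consequently the right-hand side of \eqref{e:1.4} rewrites as $\tfrac{n}{N\nu^{N-1}v(M,\theta)}\sum_\mu\|\xi f^{(\mu)}\|^2$.

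It remains to bound the denominator from below, and in view of the rewriting the goal is exactly
\[
\Big(\sum_\mu\|u^{(\mu)}\|^2\Big)\Big(\sum_\mu\|\xi f^{(\mu)}\|^2\Big) \ge \big(N\nu^{N-1}v(M,\theta)\big)^2 .
\]
I would deduce this from Cauchy–Schwarz applied to the pairing $\sum_\mu\langle u^{(\mu)}, \overline{\xi f^{(\mu)}}\rangle$, once I show this pairing equals $N\nu^{N-1}v(M,\theta)$. The transverse derivative of \eqref{e:13} is the starting point: since $\xi$ is of type $(1,0)$ it kills the antiholomorphic $\bar f^{(\mu)}$, so $\sum_\mu \bar f^{(\mu)}\,\xi f^{(\mu)} = \xi\big[(\rho+\nu)^N-\psi\big] = N\nu^{N-1} - \xi\psi$ on $M$, whose integral is $N\nu^{N-1}v(M,\theta)$ up to correction terms coming from $\xi\psi$ and from the CR part $\Pi\bar f^{(\mu)}$. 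The main obstacle is to show that these corrections drop out: I expect $\int_M \xi\psi\,dV$ to vanish because $\psi$ is pluriharmonic (so that $(\xi+\bar\xi)\psi$ is, up to the transverse-curvature normalization, a normal derivative of a harmonic function, integrating to zero over the closed $M$ by the divergence theorem), and the $\Pi\bar f^{(\mu)}$ contributions to be eliminated by the orthogonality $u^{(\mu)}\perp\ker\Box_b$ after integrating by parts. Verifying this cancellation is the technical heart of the argument.

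Combining the three computations gives $\lambda_1 \le nN\nu^{N-1}v(M,\theta)\big/\sum_\mu\|u^{(\mu)}\|^2$, which the denominator estimate bounds by the right-hand side of \eqref{e:1.4}. For the equality statement, equality forces both inequalities to be equalities. The variational inequality is an equality only if every nonzero $u^{(\mu)}$ satisfies $\Box_b u^{(\mu)} = \lambda_1 u^{(\mu)}$; since $b^{(\mu)} = \Box_b\bar f^{(\mu)} = \Box_b u^{(\mu)}$, this says precisely that the $b^{(\mu)}$ are eigenfunctions for $\lambda_1$, as claimed.
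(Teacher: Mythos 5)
Your route is genuinely different from the paper's, and its computational skeleton is sound: the two pointwise identities you extract from \eqref{e:13} --- $\sum_\mu|\bar{\partial}_b\bar{f}^{(\mu)}|^2=nN\nu^{N-1}$ along the Levi distribution, and $\sum_\mu|\xi f^{(\mu)}|^2=N\nu^{N-1}\bigl(r[\rho]+(N-1)/\nu\bigr)$ in the transverse direction --- are exactly the identities the paper derives. But the paper feeds them into a different quotient: it applies $\lambda_1\le\|\Box_b u\|^2/\|\bar{\partial}_b u\|^2$ (Corollary 3.2 of \cite{DLL}) with $u=\bar{f}^{(\mu)}$, together with the formula $\Box_b\bar{f}^{(\mu)}=n\bar{\xi}\bar{f}^{(\mu)}$ from Proposition 3.1. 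That choice means only second derivatives of \eqref{e:13} are ever used, so the pluriharmonic $\psi$ drops out pointwise ($\psi_{j\bar{k}}=0$); no Szeg\H{o} projection, no lower bound on $\|u^{(\mu)}\|^2$, and no integral identities are needed. Your choice of the Rayleigh quotient $\|\bar{\partial}_b u\|^2/\|u\|^2$ is what forces all of that extra structure on you.

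That is where the genuine gap sits, precisely at the step you flag as the technical heart and leave unverified: the vanishing of $\int_M\xi\psi\,dV$ and of $\sum_\mu\int_M(\Pi\bar{f}^{(\mu)})\,\xi f^{(\mu)}\,dV$. Both statements are true, but neither of your heuristics proves them; both in fact require the explicit formula $\Box_b=\widetilde{\Delta}_{\rho}+n\bar{\xi}$ of Proposition 3.1, equation \eqref{e:ks}, which your argument never establishes. The divergence-theorem heuristic does not engage the pseudohermitian volume form (pluriharmonic functions are harmonic for the Euclidean Laplacian, but $N_\rho$ and $\theta\wedge(d\theta)^n$ are not the Euclidean normal and surface measure, and the Jacobian factors do not obviously cancel), while ``orthogonality after integrating by parts'' is vacuous until you know that $\xi f^{(\mu)}$ restricted to $M$ equals $\tfrac1n\overline{\Box_b\bar{f}^{(\mu)}}$, which is what turns your pairing into an inner product with an element of $\mathrm{range}(\Box_b)$. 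Given \eqref{e:ks} both cancellations are one-liners, because $\widetilde{\Delta}_{\rho}$ differentiates only through the mixed derivatives $\partial_j\partial_{\bar{k}}$ and hence annihilates antiholomorphic and pluriharmonic functions: (i) $\Box_b\psi=n\bar{\xi}\psi$, so $\int_M\bar{\xi}\psi\,dV=\tfrac1n\langle\bar{\partial}_b\psi,\bar{\partial}_b 1\rangle=0$, and conjugating (with $\psi$ real) gives $\int_M\xi\psi\,dV=0$; (ii) $n\,\xi f^{(\mu)}=\overline{\Box_b\bar{f}^{(\mu)}}$ on $M$, so $n\int_M(\Pi\bar{f}^{(\mu)})\,\xi f^{(\mu)}\,dV=\langle\Pi\bar{f}^{(\mu)},\Box_b\bar{f}^{(\mu)}\rangle=0$, since $\ker\Box_b\perp\mathrm{range}(\Box_b)$ for the self-adjoint $\Box_b$. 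With these two lines inserted your Cauchy--Schwarz scheme closes and yields \eqref{e:1.4}, and your equality discussion is correct (up to checking, as the paper does, that the $b^{(\mu)}$ are not all trivial). So the proposal is a viable alternative proof, but as written it is incomplete exactly at its central step, and the missing ingredient is the same explicit Kohn--Laplacian formula that makes the paper's argument short.
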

It is easy to see that in the special case $N=1$, estimate~\eqref{e:1.4} is stronger than the one in Theorem~4.1 in \cite{DLL}.
Moreover, putting $N=1$, $\nu=1$, and $\psi = 0$, we obtain the following ``Reilly-type'' estimate.
\begin{corollary}\label{cor:1}
	Suppose that $M$ is a compact strictly pseudoconvex manifold and $F\colon M\to \mathbb{S}^{2K+1}$ is a CR immersion. Let $\Theta$ be the standard pseudohermtian structure on the unit sphere, and let $r_F =r[\rho]$ where \(\rho: = \sum_{\mu = 1}^K |F^{(\mu)}|^2 -1\). Then
	\begin{equation}\label{e:ub}
	\lambda_1(M,F^{\ast} \Theta) \leq \frac{n}{v(M,F^{\ast} \Theta)} \int_M r_F\, F^{\ast} \Theta \wedge (dF^{\ast} \Theta)^n.
	\end{equation}
	If the equality occurs, then $b^{(\mu)}:=\Box_b\bar{F}^{(\mu)}$ are eigenfunctions that correspond to~$\lambda_1$.
\end{corollary}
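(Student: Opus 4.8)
The plan is to recognize Corollary~\ref{cor:1} as the special case $N=1$, $\nu=1$, $\psi=0$ of Theorem~\ref{thm:ub}, so that the work is entirely in matching the intrinsic data on $M$ with the pullback of the standard data on the sphere and in checking that \eqref{e:13} holds tautologically. First I would realize $M$ as a strictly pseudoconvex hypersurface in $\CC^{n+1}$ (possible since $M$ is compact, strictly pseudoconvex, and embeddable, as recalled in the introduction), so that Theorem~\ref{thm:ub} applies, and set $\rho := \sum_{\mu=1}^K |F^{(\mu)}|^2 - 1$. Because $F$ maps $M$ into $\mathbb{S}^{2K+1}$ we have $\sum_\mu |F^{(\mu)}|^2 \equiv 1$ on $M$, so $\rho$ vanishes on $M$ and is a defining function. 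Moreover, since $F$ is a CR map, the pullback $F^{\ast}$ intertwines the ambient $\partial$ and $\db$ with the tangential operators on $M$; a direct computation then gives $\theta = \iota^{\ast}(i/2)(\db\rho - \partial\rho) = F^{\ast}\Theta$, which is precisely why the volume $v(M, F^{\ast}\Theta)$ and the form $F^{\ast}\Theta \wedge (dF^{\ast}\Theta)^n$ appearing in the statement are the correct objects, and why $r_F = r[\rho]$.

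Next I would verify \eqref{e:13} for the chosen parameters. The CR functions $F^{(\mu)}$ extend to holomorphic functions $f^{(\mu)}$ in a neighborhood of $M$, and with $N=1$, $\nu=1$, $\psi=0$ condition \eqref{e:13} becomes $(\rho+1)^1 - 0 = \sum_\mu |f^{(\mu)}|^2$, which is exactly the definition of $\rho$. I would also confirm that $\rho$ is a genuine strictly pseudoconvex defining function: being a sum of squared moduli of holomorphic functions, $i\partial\db\rho = i\sum_\mu \partial f^{(\mu)}\wedge\overline{\partial f^{(\mu)}} \geq 0$, and positivity of its restriction to $T^{1,0}M \times T^{0,1}M$ is guaranteed by $F$ being a CR immersion, so that $r[\rho]$ is well defined. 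With the hypotheses in place, \eqref{e:1.4} reads
\begin{equation*}
\lambda_1(M, F^{\ast}\Theta) \leq \frac{n}{v(M, F^{\ast}\Theta)} \int_M r_F\, F^{\ast}\Theta \wedge (dF^{\ast}\Theta)^n + \frac{n(1-1)}{1},
\end{equation*}
and the final term vanishes, yielding \eqref{e:ub}. The equality clause transfers directly: the functions $b^{(\mu)} = \Box_b \fbar^{(\mu)} = \Box_b \bar{F}^{(\mu)}$ provided by Theorem~\ref{thm:ub} are the claimed eigenfunctions.

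The step I expect to require the most care is the realization of the immersed $M$ as an embedded hypersurface in $\CC^{n+1}$ together with the holomorphic extension of the components $F^{(\mu)}$, and the attendant check that the transverse curvature $r[\rho]$ computed from the neighborhood extension is consistent with the intrinsic pullback structure $F^{\ast}\Theta$. Since $r[\rho]$ a priori depends on the two-jet of $\rho$ transverse to $M$, I would make sure that this jet is determined by the holomorphic extension in a CR-invariant manner; this is where the immersion hypothesis and the CR-invariance of the construction are genuinely used, the remaining manipulations being routine substitution into Theorem~\ref{thm:ub}.
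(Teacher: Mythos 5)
Your core argument is exactly the paper's: Corollary~\ref{cor:1} is obtained by substituting $N=1$, $\nu=1$, $\psi=0$ into Theorem~\ref{thm:ub}, noting that \eqref{e:13} then reads $\rho+1=\sum_{\mu}|f^{(\mu)}|^2$, which is the definition of $\rho$, that $\iota^{\ast}(i/2)(\bar{\partial}\rho-\partial\rho)=F^{\ast}\Theta$, and that the term $n(N-1)/\nu$ in \eqref{e:1.4} vanishes; the equality clause transfers verbatim. Your verifications of the pullback identity and of the Levi-form positivity (via the CR immersion hypothesis) are correct, and in the paper this corollary is literally a one-line specialization of the theorem.

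The one step that is wrong as you justify it is the opening move: ``realize $M$ as a strictly pseudoconvex hypersurface in $\CC^{n+1}$\dots{} possible since $M$ is compact, strictly pseudoconvex, and embeddable.'' Embeddability (Boutet de Monvel in dimension $\geq 5$, Kohn's closed-range condition in dimension $3$) produces a CR embedding into some $\CC^N$ of generally large codimension; it does \emph{not} produce a realization of an abstract $M$ as a real hypersurface in $\CC^{n+1}$, and no such realization exists in general. The corollary must instead be read in the paper's standing setting, which is also what Theorem~\ref{thm:ub} requires: $M$ is already a compact strictly pseudoconvex real hypersurface in $\CC^{n+1}$, and the components $F^{(\mu)}$ of the CR immersion are restrictions of functions holomorphic near $M$, so that $\rho=\sum_{\mu}|F^{(\mu)}|^2-1$ is an honest defining function and $r_F=r[\rho]$ is meaningful; this is precisely how the paper applies the corollary in Example~\ref{ex:1}, with $M=\mathbb{S}^3\subset\CC^2$ and $F$ Alexander's map. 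Under that reading your realization step is unnecessary, and the rest of your argument coincides with the paper's derivation. (A minor point that both you and the paper gloss over: a smooth CR function on a compact strictly pseudoconvex hypersurface extends holomorphically only to the pseudoconvex side, smoothly up to $M$, not in general to a two-sided neighborhood; this suffices here because the proof of Theorem~\ref{thm:ub} only differentiates \eqref{e:13} at points of $M$.)
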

We shall show in Example~\ref{ex:1} that $r_F$ is constant does not implies that the equality in \eqref{e:ub} holds. It is an open question whether the equality in \eqref{e:ub} holds only if $M$ is a sphere.

We note that Corollary 1.2 is analogous to the well-known Reilly extrinsic estimate in Riemannian geometry from 1977 \cite{Reilly}: ``The first eigenvalue of the Laplacian for a compact $n$-manifold isometrically immersed in Euclidean space is bounded above by $n$ times the average value of the square of the norm of the mean curvature vector.'' 

The second purpose of this paper is to study the lower bound for $\lambda_1$, explicitly in terms of the defining function. For this purpose, we shall
consider the unique volume-normalized pseudohermitian structure for $M$. Recall that for any strictly pseudoconvex hypersurface $M \subset \mathbb{C}^{n+1}$, there exists a unique pseudohermitian structure $\Theta$ on $M$ that is \emph{volume-normalized} with respect to the restriction of $\zeta:= dz^1 \wedge dz^2 \wedge \cdots \wedge dz^{n+1}$ onto $M$. It is well-known (cf. \cite{Farris}) that if $\rho$ is a defining function satisfying $J[\rho] =1$ on $M$, then $\Theta: = (i/2)(\bar{\partial}\rho - \partial\rho)$ is volume-normalized with respect to $\zeta$. Here, $J[\rho]$ be the (Levi-) Fefferman determinant of $\rho$:
\begin{equation}
J[\rho] = - \det \begin{bmatrix}
\rho & \rho_{\kbar} \\
\rho_{j} & \rho_{j\kbar}
\end{bmatrix}.
\end{equation}
Following Li \cite{L16}, we say that $M$ is \emph{super-pseudoconvex} if the Webster scalar curvature $R_{\Theta}$ of $\Theta$ is positive. This condition is equivalent to the fact that the approximate solution to the Fefferman equation is strictly plurisubharmonic near $M$ (see \cite{L16}). For any defining function $\rho$ of $M$, we define
\begin{equation}\label{e:l2}
D[\rho]: = n(n+1)r[\rho] - n N_{\rho} \log J[\rho] - \tfrac{1}{2}\Delta_b \log J[\rho] - \tfrac{n}{n+1}|\bar{\partial}_b \log J[\rho]|^2 .
\end{equation}
where $N_{\rho}: = \tfrac{\xi + \bar{\xi}}{2}$ is a real vector field transverse to $M$. The sub-Laplacian $\Delta_b$, the tangential Cauchy-Riemann operator $\bar{\partial}_b$, and the norm $|\cdot |$, are defined using $\theta:=(i/2)(\bar{\partial}\rho - \partial\rho)$. The dependency of $D[\rho]$ on $\rho$ is rather complicated. However, it is clear from \eqref{e:l2} that if $J[\rho] = 1 + O(\rho^2)$ near $M$ then $D[\rho] = n(n+1)r[\rho]$ on $M$.

Our next result is the following theorem.
\begin{theorem}\label{thm:lb}
	Suppose that $M$ is a strictly pseudoconvex real hypersurface in $\mathbb{C}^{n+1}$ and $\Theta$ is the volume-normalized pseudohermitian structure on $M$. Then
	\begin{enumerate}[(i)]
		\item For any defining function $\rho$ of $M$ with $J[\rho]>0$, the Webster scalar curvature of $(M,\Theta)$ is given by
		\begin{equation}\label{e:scal}
		R_{\Theta} = J[\rho]^{1/(n+1)} D[\rho].
		\end{equation}
		In particular, the expression on the right hand side does not depend on $\rho$.
		\item $M$ is super-pseudoconvex if and only if $D[\rho] > 0$ on $M$ for a defining function $\rho$ with $J[\rho] > 0$ near $M$ (and hence for all such $\rho$).
	\end{enumerate}
\end{theorem}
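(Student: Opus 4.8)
The plan is to prove \eqref{e:scal} in two steps: first reduce it to a base identity by evaluating on a conveniently normalized defining function, and then propagate that identity to all admissible $\rho$ by means of a transformation law. Recall from Farris's normalization (cf.\ \cite{Farris}) that if $\rho_0$ satisfies $J[\rho_0]=1$ on $M$ then $\Theta=(i/2)(\bar\partial\rho_0-\partial\rho_0)$ is exactly the volume-normalized structure, and one may take $\rho_0$ to be the Fefferman approximate solution, so that $J[\rho_0]=1+O(\rho_0^2)$ near $M$. For such a $\rho_0$ the function $\log J[\rho_0]$ vanishes to second order on $M$, and since the tangential gradient of any defining function vanishes along $M$, each of the last three terms of $D[\rho_0]$ in \eqref{e:l2} vanishes there; together with $J[\rho_0]=1$ this reduces \eqref{e:scal} to the base identity $R_\Theta=n(n+1)\,r[\rho_0]$.

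The first task is therefore to establish this base identity. Here I would work directly with the Tanaka--Webster connection of $\Theta=(i/2)(\bar\partial\rho_0-\partial\rho_0)$. Choosing an admissible coframe adapted to $\rho_0$, one expresses the Levi form, the connection $1$-forms $\omega_\beta{}^\alpha$, and the pseudohermitian torsion in terms of $\rho_0$ and the field $\xi$ of the defining relation $\xi\lrcorner\,\partial\bar\partial\rho_0=r[\rho_0]\,\bar\partial\rho_0$, $\partial\rho_0(\xi)=1$. Feeding these into the structure equations and contracting the curvature $2$-form yields the Webster scalar curvature; the normalization $J[\rho_0]=1+O(\rho_0^2)$ is exactly what makes the lower-order contributions drop out on $M$, leaving $R_\Theta=n(n+1)\,r[\rho_0]$. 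This computation is essentially the one behind the equivalence recorded in \cite{L16}, and I would either cite it or carry it out in the adapted frame.

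To propagate the identity to a general defining function $\rho$ with $J[\rho]>0$, write $\rho_0=e^{w}\rho$, where Farris's relation gives $e^{w}=J[\rho]^{-1/(n+2)}$ on $M$, equivalently $w=-\tfrac{1}{n+2}\log J[\rho]$. The one input I need is the transformation law of the transverse curvature: differentiating the defining relations for $e^{u}\rho$ as in Graham--Lee \cite{GL, LM82}, one obtains a law of the form $r[e^{u}\rho]=e^{-u}\bigl(r[\rho]+a_n\,N_\rho u+b_n\,\Delta_b u+c_n\,|\bar\partial_b u|^2\bigr)$, where $\Delta_b$, $\bar\partial_b$, and $N_\rho$ are taken with respect to $\theta=(i/2)(\bar\partial\rho-\partial\rho)$. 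Applying this with $u=w$ gives $R_\Theta=n(n+1)\,r[\rho_0]=n(n+1)\,e^{-w}\bigl(r[\rho]+\cdots\bigr)$; substituting $w=-\tfrac{1}{n+2}\log J[\rho]$ converts the correction terms into the $N_\rho\log J$, $\Delta_b\log J$, and $|\bar\partial_b\log J|^2$ terms of \eqref{e:l2}, while the factor $e^{-w}$, a power of $J[\rho]$, becomes the scalar prefactor of \eqref{e:scal}.

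The main obstacle is precisely the derivation of this transverse-curvature law with its correct coefficients $a_n,b_n,c_n$, and the verification that, after the substitution above, they reproduce exactly the coefficients $-n$, $-\tfrac12$, and $-\tfrac{n}{n+1}$ appearing in \eqref{e:l2}; these are the values forced by consistency with the CR Yamabe transformation law of the Webster scalar curvature, so I expect the bookkeeping to close. Tracking the several first- and second-order derivatives of $w$ simultaneously through the law, and keeping the off-$M$ behaviour of $w$ straight, is the delicate step. Granting this, the right-hand side of \eqref{e:scal} is independent of $\rho$ and equals $R_\Theta$, which proves part (i). Part (ii) is then immediate: since $J[\rho]>0$ makes the scalar factor in \eqref{e:scal} strictly positive on $M$, the functions $R_\Theta$ and $D[\rho]$ agree in sign pointwise, so $M$ is super-pseudoconvex (that is, $R_\Theta>0$) if and only if $D[\rho]>0$ on $M$; the $\rho$-independence just established shows that this holds for one such $\rho$ exactly when it holds for all of them.
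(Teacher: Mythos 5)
Your strategy --- establish the identity for one normalized defining function and then transport it to a general $\rho$ by a transformation law --- is genuinely different from the paper's, but the transport step contains two concrete gaps, and as designed the argument cannot terminate at \eqref{e:scal}.

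First, the transformation law you posit for the transverse curvature is never derived, and its form is not correct. If one actually computes $r[e^{w}\rho]$ on $M$ (contract $\partial\bar{\partial}(e^{w}\rho)$ with the rescaled transverse field; all terms involving $\partial\bar{\partial}w$ and $\partial w\wedge \bar{\partial}w$ carry a factor $\rho$ and die on $M$), one gets the \emph{first-order} law
\begin{equation*}
r[e^{w}\rho]\big|_{M} \;=\; e^{-w}\left( r[\rho] + 2N_{\rho}w - |\bar{\partial}_b w|^2 \right),
\end{equation*}
with no $\Delta_b w$ term at all. Consequently the term $-\tfrac12\Delta_b \log J[\rho]$ in \eqref{e:l2} can never be produced by substituting $w = c\log J[\rho]$ into such a law. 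The deeper point is that the substitution itself is illegitimate for the transverse term: the relation between $w$ and $\log J[\rho]$ holds only \emph{on} $M$, so it determines $\bar{\partial}_b w$ but not $N_{\rho}w$. The transverse derivative of the conformal factor of the Fefferman approximate solution is additional data, fixed only by the first-order off-$M$ construction --- this is exactly the role of the factor $e^{-B(z)}$ in the paper's $\tilde{\rho} = J[\rho]^{-1/(n+1)}e^{-B(z)}\rho$, where $B$ vanishes on $M$ but $N_{\rho}B$ does not, and $B$ encodes second tangential derivatives of $\log J[\rho]$; that is where the $\Delta_b\log J[\rho]$ term in $D[\rho]$ actually comes from. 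You flag the ``off-$M$ behaviour of $w$'' as delicate, but your framework has no mechanism to supply it, so the bookkeeping you expect to ``close by consistency'' is precisely the content of the theorem and cannot close in the form proposed. The paper avoids this issue entirely by arguing intrinsically: Proposition 4.1 gives $R_{\theta} = n(n+1)r[\rho] - nN_{\rho}\log J[\rho] + \tfrac12\Delta_b\log J[\rho]$ for \emph{every} $\rho$ (a trace of the Li--Luk Ricci formula), and then Lee's conformal change formula is applied to $\Theta = e^{u}\theta$; Lee's formula involves only tangential derivatives of $u$, so only $u|_M$ is needed.

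Second, your prefactor cannot match the statement. With your normalization $e^{w} = J[\rho]^{-1/(n+2)}$ on $M$, whatever formula your computation yields has the factor $e^{-w} = J[\rho]^{1/(n+2)}$ multiplying $n(n+1)r[\rho]$, whereas \eqref{e:scal} has $J[\rho]^{1/(n+1)}$; no derivative terms can absorb the discrepancy. Note there is a real tension here that you must resolve: your exponent is the one dictated by the scaling $J[c\rho] = c^{\,n+2}J[\rho]$ of the $(n+2)\times(n+2)$ bordered determinant in $\mathbb{C}^{n+1}$, while the paper takes $u = -\tfrac{1}{n+1}\log J[\rho]$ from the approximate solution of \cite{L16} (stated there for hypersurfaces in $\mathbb{C}^{n}$), and the coefficients $-n$, $-\tfrac12$, $-\tfrac{n}{n+1}$ in \eqref{e:l2} are exactly those generated by Lee's formula with that choice of $u$. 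Whichever normalization is the right one, your derivation as set up produces a formula different from the one you claim to prove. Your part (ii) is fine, but it is an immediate consequence of (i) in both your write-up and the paper's, so everything hinges on repairing the proof of (i).
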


Note in passing that in case $n =1$, Hammond \cite{Hammond} also gave a differential operator $\mathcal{L}$ explicitly such that for 
any defining function $\rho$, $\mathcal{L}[\rho]$ is a constant multiple of the Webster scalar curvature $R_\Theta$; see Remark~4.2.

We obtain the following corollary which gives an explicit lower bound for the first positive eigenvalue of the Kohn-Laplacian.
\begin{corollary}
	Let $M$ be a compact strictly pseudoconvex real hypersurface of $\mathbb{C}^{n+1}$ and $\Theta$ the
	Fefferman volume-normalized pseudohermitian structure on $M$. Let $\lambda_1$ be the first positive eigenvalue of $\Box_b$ on $(M,\Theta)$. Then for any defining function $\rho$ with 
	$J[\rho]>0$,
	\begin{equation}\label{e:lb}
	\lambda_1 \geq n\min_M J[\rho]^{1/(n+1)} D[\rho].
	\end{equation}
	provided that $n\geq 2$, or $n=1$ and $M$ has positive CR Paneitz operator.	In particular, if $\rho$ is a Fefferman approximate solution, then
	\begin{equation}
	\lambda_1 \geq n\min_M \det H[\rho]=n \min_M r[\rho].
	\end{equation}
	If the equality holds and $n\geq 2$, then $M$ must be equivalent to a sphere.
\end{corollary}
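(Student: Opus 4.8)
The plan is to combine the pointwise curvature identity of Theorem~\ref{thm:lb} with the sharp Lichnerowicz-type lower bound for $\Box_b$. Since $\lambda_1>0$ always holds, the asserted inequality is automatic wherever $\min_M R_\Theta\le 0$; hence I may assume $M$ is super-pseudoconvex, so that $R_\Theta\ge \min_M R_\Theta>0$ and the hypotheses of the Lichnerowicz estimate are in force. (In this regime the Fefferman approximate solution is strictly plurisubharmonic near $M$, which is what I will need below.)

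First I would invoke the Lichnerowicz-type estimate. For a compact strictly pseudoconvex pseudohermitian manifold $(M,\Theta)$ of dimension $2n+1$ whose Webster scalar curvature is bounded below by a positive constant --- together with the nonnegativity of the CR Paneitz operator, which is automatic when $n\ge 2$ and is imposed as a hypothesis when $n=1$ --- the estimate of Chanillo--Chiu--Yang (for $n=1$) and its higher-dimensional counterpart yield the sharp lower bound $\lambda_1\ge \tfrac{1}{n+1}\min_M R_\Theta$. This is saturated on the standard sphere, where $\lambda_1=n$ and $R_\Theta=n(n+1)$. Substituting the identity $R_\Theta=J[\rho]^{1/(n+1)}D[\rho]$ from Theorem~\ref{thm:lb}(i), valid for every defining function $\rho$ with $J[\rho]>0$ and independent of the choice of $\rho$, produces the lower bound expressed in terms of $\rho$.

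Next I would specialize to a Fefferman approximate solution, for which $J[\rho]=1+O(\rho^{\,n+2})$; in particular $J[\rho]^{1/(n+1)}=1$ on $M$, and the remark following \eqref{e:l2} gives $D[\rho]=n(n+1)\,r[\rho]$ on $M$, so $\tfrac{1}{n+1}R_\Theta=n\,r[\rho]$ there. To rewrite $r[\rho]$ as $\det H[\rho]$ I would expand the Fefferman determinant by the Schur complement about its lower-right Hessian block,
\begin{equation*}
J[\rho]=\det H[\rho]\,\bigl(\rho^{j\kbar}\rho_j\rho_{\kbar}-\rho\bigr),
\end{equation*}
which on $M$ (where $\rho=0$ and $\rho^{j\kbar}\rho_j\rho_{\kbar}=r[\rho]^{-1}$) collapses to $\det H[\rho]=J[\rho]\,r[\rho]=r[\rho]$. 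This yields the stated identity $\min_M\det H[\rho]=\min_M r[\rho]$ and the sharp form $\lambda_1\ge n\min_M r[\rho]$.

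The remaining point is the rigidity statement. Equality forces equality throughout the Lichnerowicz estimate, hence equality in the underlying CR Bochner identity and $R_\Theta\equiv\min_M R_\Theta$; for $n\ge 2$ this is exactly the equality case of the CR Obata-type theorem, which forces $(M,\Theta)$ to be CR equivalent to the standard sphere, while for $n=1$ the corresponding rigidity is unavailable and is therefore excluded. I expect the only real obstacle to lie in citing and aligning the hypotheses of the Lichnerowicz estimate --- in particular confirming the positivity and Paneitz conditions for the volume-normalized $\Theta$ --- since the substantive curvature computation is already delivered by Theorem~\ref{thm:lb}.
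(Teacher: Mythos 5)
Your overall route coincides with the paper's: combine the identity $R_{\Theta}=J[\rho]^{1/(n+1)}D[\rho]$ from Theorem~\ref{thm:lb} with the Lichnerowicz-type eigenvalue estimate for $\Box_b$, then specialize to a Fefferman approximate solution and invoke the rigidity theorem of \cite{LSW} for $n\geq 2$. Note also that your derivation lands on $\lambda_1\geq\tfrac{1}{n+1}\min_M J[\rho]^{1/(n+1)}D[\rho]$, which is exactly what the paper's own proof establishes; since $D[\rho]=n(n+1)r[\rho]$ for an approximate solution, this is what yields the sharp form $\lambda_1\geq n\min_M r[\rho]$ (saturated by the sphere), so the factor $n$ in the displayed inequality \eqref{e:lb} is evidently a misprint for $\tfrac{1}{n+1}$ and your constants are the correct ones.

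There is, however, one genuine gap. You quote the Lichnerowicz-type estimate as: a positive lower bound on the Webster \emph{scalar} curvature (plus the Paneitz condition) implies $\lambda_1\geq\tfrac{1}{n+1}\min_M R_{\Theta}$. No such scalar-curvature theorem is available for $n\geq 2$: the higher-dimensional estimate (\cite{LSW}, extending \cite{CCY}) is phrased in terms of a lower bound on the Webster \emph{Ricci} tensor, $R_{\alpha\bbar}\geq k\,h_{\alpha\bbar}$, giving $\lambda_1\geq\tfrac{n}{n+1}k$; just as in the Riemannian Lichnerowicz theorem, a bound on scalar curvature alone controls nothing in dimensions $n\geq 2$. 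The missing bridge --- and the one substantive observation in the paper's proof beyond the citations --- is that the volume-normalized structure $\Theta$ is \emph{pseudo-Einstein}, so that $R_{\alpha\bbar}=(R_{\Theta}/n)h_{\alpha\bbar}\geq(\min_M R_{\Theta}/n)\,h_{\alpha\bbar}$, and one may apply the Ricci-based estimate with $k=\min_M R_{\Theta}/n$. You flagged uncertainty at exactly this point but did not supply the needed fact. Once it is inserted, your argument goes through; your preliminary reduction to the super-pseudoconvex case and your Schur-complement derivation of $\det H[\rho]=r[\rho]$ on $M$ are both fine, though the latter follows more directly from the identity $r[\rho]=\det H[\rho]/J[\rho]$ of Proposition 2.1 without any invertibility assumption on $H[\rho]$.
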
	
We refer the reader to \cite{CCY} for the definition of CR Paneitz operator. The question when a compact strictly pseudoconvex real hypersurface in $\mathbb{C}^2$ has positive CR Paneitz operator is still open in general.

We note that, for the special case when $M$ is the real ellipsoid, the sharp lower bound for $\lambda_1$ and the characterization of equality case (Obata type theorem) were given by Lin \cite{Lin}.

The paper is organized as follows. In Section 2, we collect some basic facts about pseudohermitian geometry and fix notations. In Section 3, we prove a general formula for $\Box_b$ generalizing a formula in \cite{DLL} and prove Theorem 1.1. In Section 4, we prove Theorem 1.3 and Corollary~1.4.

\section{Preliminaries}

Let $M\subset \mathbb{C}^{n+1}$ be a strictly pseudoconvex real hypersurface and $\rho$ a defining function for $M$, i.e.,
\(M:=\{Z\in \mathbb{C}^{n+1} \colon \rho(Z)=0\},\) with $d\rho \ne 0$ along $M$. The restriction of $ i\partial\bar{\partial} \rho$ to $T^{1,0}M\times T^{0,1}M$ is definite and is assumed to be positive. There exists unique $(1,0)$ vector field $\xi$ near $M$ and a real function $r$ such that, cf. \cite{LM82}
\begin{equation}
\xi\, \lrcorner \, \partial\bar{\partial} \rho = r[\rho] \, \bar{\partial} \rho,
\quad
\partial\rho(\xi) = 1.
\end{equation}
Let \(N = \tfrac12\left(\xi + \overline{\xi}\right)\) and \(T = i(\xi - \overline{\xi})\). Then \(N \rho = 1\) and \(T \rho = 0\). Furthermore, $T$ is the Reeb vector field associated to the pseudohermitian induced by $\rho$: 
\begin{equation}
\theta = \frac{i}{2}(\bar{\partial}\rho - \partial \rho).
\end{equation}
In local computations, write $\xi = (\xi^k)$ and choose
\begin{equation}\label{e:cframe}
\theta^{k}: = dz^k - \xi^k \partial\rho.
\end{equation}
Since $\rho_k \xi^k = 1$, one has \(\rho_k \theta^k = 0\). By direct calculations,
\begin{align}
-id\theta
= 
\rho_{j\kbar} \theta^{j} \wedge \theta^{\kbar}
+ 
r [\rho] \partial\rho \wedge \bar{\partial}\rho.
\end{align}
Thus, the restriction of $\rho_{j\kbar}\theta^j\wedge \theta^{\kbar}$ to $M$ is the Levi-form of $(M,\theta)$. The function $r=r[\rho]$
is called \emph{transverse curvature} \cite{GL}. When $\rho_{n+1} \ne 0$, we can write
\begin{equation}
-id\theta
= h_{\alpha\bbar} \theta^{\alpha}\wedge \theta^{\bbar}
+
r \partial\rho \wedge \bar{\partial}\rho,
\end{equation}
where
\begin{equation}\label{e:levimatrix}
h_{\alpha\bbar}=\rho_{\alpha \bbar}-\rho_\alpha \partial_{\bbar}\log \rho_{n+1}-\rho_{\bbar}\partial_{\alpha}\log \rho_{\overline{n+1}}+\rho_{n+1\overline{n+1}}
\frac{\rho_\alpha \rho_{\bbar}}{|\rho_{n+1}|^2}
\end{equation}
are the entries of the Levi matrix. By direct calculations, we obtain 
\begin{equation}
h_{\alpha\bbar} \xi^{\alpha} = -Z_{\bbar} \log \rho_{\overline{n+1}}\quad \hbox{with }\ Z_{\bbar}={\partial \over \partial \zbar_\beta}-{\rho_{\bbar}\over \rho_{\overline{n+1}}}{\partial \over
	\partial \zbar_{n+1}}.
\end{equation}
Sometimes we use $w$ for the last coordinate $z_{n+1}$. We always assume that $J[\rho] > 0$ along $M$.

\begin{proposition} Let $\phi^{j\kbar}$ be the adjugate matrix of $H[\rho]: = [\rho_{k\lbar}]$.	Then
	\begin{align}
	r[\rho]\, \rho_{\kbar} = \rho_{j\kbar} \xi^{j}, \quad r[\rho] = \rho_{j\kbar}\xi^{j}\xi^{\kbar} = \frac{\det H[\rho]}{J[\rho]}, \quad 
	\hbox{and }\ \xi^k = \frac{\phi^{\jbar k} \rho_{\jbar}}{J[\rho]}.
	\end{align}
	In particular, $\det H[\rho] = 0$ if and only if the transverse curvature $r[\rho]=0$. 
\end{proposition}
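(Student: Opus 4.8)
The plan is to treat the defining relations for $(\xi, r)$ as a single linear system and extract all three identities by linear algebra, reserving genuine computation only for the identification of the coefficient determinant with the Fefferman determinant $J[\rho]$.

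First I would unwind the interior product in the coframe $dz^1,\dots,dz^{n+1}$. Since $\partial\bar{\partial}\rho = \rho_{j\kbar}\,dz^j\wedge d\zbar^k$ and $\xi = \xi^j\,\partial_{z^j}$ is of type $(1,0)$ (so $\xi\lrcorner d\zbar^k = 0$), contraction gives $\xi\lrcorner\partial\bar{\partial}\rho = \rho_{j\kbar}\xi^j\,d\zbar^k$, whereas $\bar{\partial}\rho = \rho_{\kbar}\,d\zbar^k$. Matching the coefficients of $d\zbar^k$ in the defining equation $\xi\lrcorner\partial\bar{\partial}\rho = r[\rho]\,\bar{\partial}\rho$ yields at once $\rho_{j\kbar}\xi^j = r[\rho]\,\rho_{\kbar}$, the first identity. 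Multiplying by $\xi^{\kbar}$, summing, and using the normalization $\rho_k\xi^k = 1$ (hence $\rho_{\kbar}\xi^{\kbar}=1$ after conjugation) then gives $r[\rho] = \rho_{j\kbar}\xi^j\xi^{\kbar}$, the middle equality of the second display.

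Next I would read the two explicit formulas off the linear system consisting of the $n+1$ equations $\rho_{j\kbar}\xi^j = r\,\rho_{\kbar}$ together with the normalization $\rho_j\xi^j = 1$, regarded as $n+2$ equations in the unknowns $\xi^1,\dots,\xi^{n+1},r$. Applying Cramer's rule, the common denominator is the determinant $\Delta$ of the coefficient matrix, which I would evaluate by a Schur-complement (equivalently cofactor) expansion against the Hessian block $[\rho_{j\kbar}]$. Using the Hermitian symmetry of $H[\rho]$ and of its adjugate $\phi$, together with $\rho = 0$ on $M$ to discard the corner contribution, this identifies $\Delta$ with the bordered Fefferman determinant, namely $\Delta = \rho_{\jbar}\phi^{\jbar k}\rho_k = J[\rho]$ on $M$. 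The corresponding numerators then produce $\xi^k = \phi^{\jbar k}\rho_{\jbar}/J[\rho]$ and, substituting back, $r[\rho] = \det H[\rho]/J[\rho]$; as an internal check these are consistent both with $\rho_k\xi^k = 1$ and with $r[\rho] = \rho_{j\kbar}\xi^j\xi^{\kbar}$ from the first step.

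Finally, because $J[\rho] > 0$ along $M$, the identity $r[\rho] = \det H[\rho]/J[\rho]$ forces $r[\rho] = 0 \iff \det H[\rho] = 0$ on $M$. I expect the main difficulty to be purely bookkeeping: tracking the conjugation and transpose that Cramer's rule introduces, so that the adjugate enters with exactly the bar-index placement $\phi^{\jbar k}$ asserted, and verifying that the Schur-complement expansion of $\Delta$ reproduces precisely the bordered determinant defining $J[\rho]$. A point worth stressing is that these are identities \emph{on $M$} (where $\rho$ vanishes) and that $\rho_{\jbar}\phi^{\jbar k}\rho_k$ is polynomial in the entries of $H[\rho]$; hence no invertibility of $H[\rho]$ is ever used, and the equivalence $\det H[\rho]=0 \iff r[\rho]=0$ remains valid on the degenerate locus.
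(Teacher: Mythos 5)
Your proposal is correct. The paper itself gives no argument here---its proof reads, in full, ``This is from routine calculation''---so there is nothing to compare against except the implicit expectation of a direct computation, which is exactly what you supply. Your organization of that computation is sound: matching coefficients of $d\zbar^k$ in $\xi\lrcorner\partial\bar{\partial}\rho = r\,\bar{\partial}\rho$ gives the first identity, contracting with $\xi^{\kbar}$ and using $\rho_{\kbar}\xi^{\kbar}=1$ gives the second, and Cramer's rule on the combined system $\rho_{j\kbar}\xi^j = r\rho_{\kbar}$, $\rho_j\xi^j=1$ gives the explicit formulas, since the coefficient determinant equals $\rho_{\jbar}\phi^{\jbar k}\rho_k$, which coincides with $J[\rho]$ precisely where $\rho=0$. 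Two points you raise are genuinely worth keeping: first, that the solvability of the bordered system rests on $J[\rho]>0$ rather than on invertibility of $H[\rho]$, so the identity $r[\rho]=\det H[\rho]/J[\rho]$ and the equivalence $\det H[\rho]=0 \iff r[\rho]=0$ survive on the degenerate locus; second, that the formulas involving $J[\rho]$ are identities \emph{on} $M$ only (off $M$ the bordered determinant acquires the extra term $-\rho\det H[\rho]$), which is consistent with how the paper later uses them, e.g.\ in the proof of Proposition 2.3 where $\rho_{\kbar}\phi^{\kbar j}\rho_j = J[\rho]$ is invoked on $M$.
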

\begin{proof}
	This is from routine calculation.
\end{proof}
To compute the connection forms, we note that $\theta^{\alpha} = dz^{\alpha} - i\xi^{\alpha}\theta$ is an admissible coframe on $M$ with respect to $\theta$. 
Then the corresponding connection forms $\omega_{\beta}{}^{\alpha}$ were calculated in \cite{LL} (see also \cite{Webster}). Namely,
\begin{align}\label{e:cf}
\omega_{\beta}{}^{\alpha}
=
\left(h^{\alpha\bar{\mu}}Z_{\gamma} h_{\beta\bar{\mu}} -\xi_{\beta}\delta_{\gamma}^{\alpha}\right) \theta^{\gamma}
+ \xi^{\alpha} h_{\beta\bar{\gamma}} \theta^{\bar{\gamma}}
- i Z_{\beta} \xi^{\alpha}\theta.
\end{align}
This formula will be crucial for our further computations. 

\begin{proposition}[Li-Luk \cite{LL}]\label{prop:ll}
	Let $M$ be a strictly pseudoconvex real hypersurface defined by $\rho=0$, not necessarily plurisubharmonic, and $\theta = (i/2)(\bar{\partial}\rho - \partial\rho)$. Define
	\begin{equation}
	D^{\rho}_{\alpha\bbar}
	=
	\partial_{\bbar}\partial_{\alpha} 
	-
	(\rho_{\alpha}/\rho_{w}) \partial_{w}\partial_{\bbar}
	-
	(\rho_{\bbar}/\rho_{\wbar}) \partial_{\wbar}\partial_{\alpha} 
	+
	(\rho_{\alpha}\rho_{\bbar}/|\rho_{w}|^2) \partial_{w}\partial_{\wbar}.
	\end{equation}
	Then in term of the coframe $\{\theta^{\alpha}\}$ as in \eqref{e:cframe}, the Webster Ricci tensor has components
	\begin{equation}
	R_{\alpha\bbar}
	=
	- D^{\rho}_{\alpha\bbar} \log J[\rho]
	+ (n+1) r[\rho]\, h_{\alpha\bbar}.		
	\end{equation}
\end{proposition}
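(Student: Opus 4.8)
The plan is to obtain the Webster Ricci tensor by \emph{tracing} the pseudohermitian curvature, which reduces the computation to a single $1$-form and one of its exterior derivatives. Write $\omega_\beta{}^\alpha$ for the connection forms in \eqref{e:cf} and let $\Omega_\beta{}^\alpha = d\omega_\beta{}^\alpha - \omega_\beta{}^\gamma\wedge\omega_\gamma{}^\alpha$ be the curvature forms. Since each $\omega_\beta{}^\alpha$ is a $1$-form, relabeling summation indices gives $\omega_\gamma{}^\delta\wedge\omega_\delta{}^\gamma = -\omega_\gamma{}^\delta\wedge\omega_\delta{}^\gamma$, so the trace of the quadratic term vanishes and
\begin{equation}
\Omega_\gamma{}^\gamma = d\,\omega_\gamma{}^\gamma .
\end{equation}
Because $M$ is embedded, hence integrable, the pure $\theta^\gamma\wedge\theta^\delta$ components of the curvature vanish, and tracing the structure equation shows that $R_{\alpha\bbar}$ is the coefficient of $\theta^\alpha\wedge\theta^{\bbar}$ in $d\,\omega_\gamma{}^\gamma$ (the overall sign matching the convention is checked against the leading term below). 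Thus it suffices to compute the trace connection form and read off one coefficient of its differential.

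First I would compute $\omega_\gamma{}^\gamma$ from \eqref{e:cf}. Setting $\alpha=\beta$ and summing, the identity $\trace(h^{-1}Z_\gamma h) = Z_\gamma\log\det h$ handles the first term, while the formula $h_{\alpha\bbar}\xi^\alpha = -Z_{\bbar}\log\rho_{\wbar}$ recorded above, together with its conjugate $\xi_\gamma = -Z_\gamma\log\rho_w$, handles the remaining two, giving
\begin{equation}
\omega_\gamma{}^\gamma = Z_\gamma\log\!\big(\rho_w\det h\big)\,\theta^\gamma - Z_{\bbar}\log\rho_{\wbar}\,\theta^{\bbar} - i\,(Z_\gamma\xi^\gamma)\,\theta .
\end{equation}
The key algebraic input is then the bordered-determinant identity
\begin{equation}
\det h = \frac{J[\rho]}{|\rho_w|^2},
\end{equation}
which follows from a Schur-complement computation on the matrix defining $J[\rho]$ and is consistent with $r[\rho]=\det H[\rho]/J[\rho]$ from the preceding Proposition. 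It converts $\det h$ into the Fefferman determinant, $\rho_w\det h = J[\rho]/\rho_{\wbar}$, so the holomorphic coefficient becomes $Z_\gamma(\log J[\rho]-\log\rho_{\wbar})$ and $\log J[\rho]$ enters the picture.

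Next I would apply $d$ and collect the $\theta^\alpha\wedge\theta^{\bbar}$ terms. For the differentials of the coframe I would use the structure equations $d\theta^\alpha = \theta^\beta\wedge\omega_\beta{}^\alpha + \theta\wedge\tau^\alpha$ and their conjugates, and the fact that on $M$ one has $-i\,d\theta = h_{\alpha\bbar}\theta^\alpha\wedge\theta^{\bbar}$, since $r\,\partial\rho\wedge\db\rho$ is proportional to $\theta\wedge\theta=0$ along $M$. The second-order contributions assemble as $-Z_{\bbar}Z_\alpha\log J[\rho]$ plus $\log\rho_w,\log\rho_{\wbar}$ pieces; since $D^\rho_{\alpha\bbar}$ is exactly the principal (second-order) part of $Z_{\bbar}Z_\alpha$, these should collapse to $-D^\rho_{\alpha\bbar}\log J[\rho]$ once the $\log\rho_w,\log\rho_{\wbar}$ contributions cancel. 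Simultaneously, the zeroth-order terms (the coefficients of $h_{\alpha\bbar}$) should assemble, via $r[\rho]=\rho_{j\bbar}\xi^j\xi^{\bbar}$, $r[\rho]\rho_{\bbar}=\rho_{j\bbar}\xi^j$, and $\xi^k=\phi^{\jbar k}\rho_{\jbar}/J[\rho]$, into $(n+1)\,r[\rho]\,h_{\alpha\bbar}$.

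The main obstacle is precisely this final bookkeeping. Because $\log J[\rho]$, $\log\rho_w$, $h_{\alpha\bbar}$, and $\xi^\gamma$ are all \emph{ambient} quantities, the operators $Z_\alpha$ and $Z_{\bbar}$ differentiate them in both tangential and transverse directions, producing first-order-in-$\log J[\rho]$ remainders coming from differentiating the coefficients $\rho_\alpha/\rho_w$, commutator terms $[Z_{\bbar},Z_\alpha]$, and numerous $\log\rho_w$-type expressions. One must verify that every term not of the form $D^\rho_{\alpha\bbar}\log J[\rho]$ or $h_{\alpha\bbar}$ cancels, and that the surviving Levi-form coefficient is \emph{exactly} $(n+1)\,r[\rho]$ rather than some other multiple; this is where the identities of the preceding Proposition and the commutation relations among $Z_\alpha$, $Z_{\bbar}$, and $\partial_w$ are used decisively. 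The determinant identity $\det h = J[\rho]/|\rho_w|^2$ is the linchpin that makes $\log J[\rho]$, rather than $\log\det h$, appear in the final formula.
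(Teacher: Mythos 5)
Your strategy is sound and, in substance, it is the \emph{same} route as the source on which the paper relies: the paper's own proof of this proposition is a one-line citation of equation (2.43) of Li-Luk \cite{LL} together with the identity $r[\rho]=\det H[\rho]/J[\rho]$, and what you outline---tracing the connection form \eqref{e:cf}, using that the quadratic term drops out of the trace so that $\Omega_\gamma{}^\gamma=d\omega_\gamma{}^\gamma$, computing
\begin{equation*}
\omega_\gamma{}^\gamma
=
Z_\gamma\log\bigl(\rho_w\det h\bigr)\,\theta^\gamma
-
Z_{\bbar}\log\rho_{\wbar}\,\theta^{\bbar}
-
i\,(Z_\gamma\xi^\gamma)\,\theta ,
\end{equation*}
and then trading $\det h$ for the Fefferman determinant---is precisely how that equation is derived there. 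The steps you actually carry out are correct: the vanishing of $\omega_\gamma{}^\delta\wedge\omega_\delta{}^\gamma$ under the trace, the identity $\xi_\gamma=-Z_\gamma\log\rho_w$ (the conjugate of the identity $h_{\alpha\bbar}\xi^\alpha=-Z_{\bbar}\log\rho_{\overline{n+1}}$ recorded in Section 2), the trace formula above, the observation that $D^\rho_{\alpha\bbar}$ is the principal part of $Z_{\bbar}Z_\alpha$, and the bordered-determinant identity $\det h=J[\rho]/|\rho_w|^2$ (valid along $M$, and consistent with $r[\rho]=\det H[\rho]/J[\rho]$).

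The gap is that your write-up stops exactly where the real work begins. The entire content of the proposition is the claim that, upon expanding $d\omega_\gamma{}^\gamma$ and isolating the $\theta^\alpha\wedge\theta^{\bbar}$ component, (a) every first-order remainder---the terms in which a derivative falls on the coefficients $\rho_\alpha/\rho_w$ and $\rho_{\bbar}/\rho_{\wbar}$, and all the $\log\rho_w$, $\log\rho_{\wbar}$ contributions---cancels, so that the second-order part survives only in the combination $-D^\rho_{\alpha\bbar}\log J[\rho]$; and (b) the zeroth-order terms generated by $d\theta^\gamma=\theta^\beta\wedge\omega_\beta{}^\gamma+\theta\wedge\tau^\gamma$, by $d\theta$, and by the derivatives of $\xi^\gamma$ assemble to the Levi form with coefficient \emph{exactly} $(n+1)r[\rho]$. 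You defer both (a) and (b) as ``bookkeeping'' and assert only that the terms ``should collapse'' and ``should assemble.'' But those cancellations, and the precise constant $n+1$, \emph{are} the theorem: nothing in the proposal rules out, say, a surviving term linear in $\db_b\log J[\rho]$, or a coefficient $n$ or $n+2$ in front of $r[\rho]\,h_{\alpha\bbar}$ (note that the closely related scalar identity in Proposition \ref{prop:wsc} does produce extra first-order terms $-nN_\rho\log J[\rho]$ after contraction, so such terms do not cancel ``for free''). To turn this into a proof you must either push the expansion of $d\omega_\gamma{}^\gamma$ through to the end---a genuinely lengthy computation, which is why it occupies several pages of \cite{LL}---or do what the paper does: quote equation (2.43) of \cite{LL} and convert $\det H[\rho]/J[\rho]$ into $r[\rho]$ using Proposition 2.1.
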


\begin{proof} This follows from \cite{LL}, equation (2.43) and the fact that $r[\rho] = \det H(\rho) / J[\rho]$.
\end{proof}
The following proposition is implicit in \cite{LM82}.
\begin{proposition}
	Let \(\psi_{j\kbar} =\rho_{j\kbar} + (1-r[\rho])\rho_j \rho_{\kbar}\) and suppose that $J[\rho] >0$. Then $\psi_{j\kbar}$ is invertible. Let $\psi^{\kbar j}$ be its inverse. Then \(\xi^{\kbar} = \rho_j \psi^{\kbar j}\).	Moreover, the inverse of Levi-matrix is
	\begin{equation}\label{e:inverselevi}
	h^{\bbar\gamma} 
	=
	\psi^{\bbar\gamma} - \xi^{\bbar} \xi^{\gamma}.
	\end{equation}
\end{proposition}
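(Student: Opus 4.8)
The plan is to verify the three assertions in turn, treating the displayed identities as statements to be checked rather than derived blindly; the engine throughout is the pair of defining relations for $\xi$ and $r[\rho]$, in the conjugated form
\[
\rho_{j\kbar}\xi^{\kbar} = r[\rho]\,\rho_j, \qquad \rho_{\kbar}\xi^{\kbar} = 1,
\]
together with $\det H[\rho] = r[\rho]\,J[\rho]$, all of which are recorded in the first Proposition of this section. I would first establish $\xi^{\kbar} = \rho_j\psi^{\kbar j}$, since it drives everything else. Assuming invertibility, this is equivalent (after multiplying by $\psi_{l\kbar}$) to $\psi_{l\kbar}\xi^{\kbar} = \rho_l$, which is a one-line computation:
\[
\psi_{l\kbar}\xi^{\kbar} = \rho_{l\kbar}\xi^{\kbar} + (1-r[\rho])\rho_l\rho_{\kbar}\xi^{\kbar} = r[\rho]\rho_l + (1-r[\rho])\rho_l = \rho_l.
\]
At the same time I would record the Hermitian-symmetric companions $\xi^{j}\psi_{j\kbar} = \rho_{\kbar}$ and $\rho_{\bbar}\psi^{\bbar\gamma} = \xi^{\gamma}$, which are needed in the final step.

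Second, for invertibility I would compute $\det\psi$ directly. Viewing $\psi_{j\kbar} = \rho_{j\kbar} + (1-r[\rho])\rho_j\rho_{\kbar}$ as a rank-one update of $H[\rho]=[\rho_{j\kbar}]$, the standard rank-one determinant identity (valid whether or not $H[\rho]$ is singular) gives
\[
\det\psi = \det H[\rho] + (1-r[\rho])\,\rho_{\kbar}\phi^{\kbar j}\rho_j,
\]
where $\phi = \mathrm{adj}(H[\rho])$ is the adjugate from the first Proposition. On $M$, where $\rho = 0$, the cofactor expansion of the bordered determinant defining $J[\rho]$ collapses to $J[\rho] = \rho_{\kbar}\phi^{\kbar j}\rho_j$. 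Substituting this and $\det H[\rho] = r[\rho]\,J[\rho]$ yields
\[
\det\psi = r[\rho]\,J[\rho] + (1-r[\rho])\,J[\rho] = J[\rho] > 0,
\]
so $\psi_{j\kbar}$ is invertible. A pleasant feature of this route is that it never divides by $r[\rho]$, so the degenerate case $r[\rho]=0$ (equivalently $\det H[\rho]=0$) is covered automatically.

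Finally, for the inverse Levi matrix I would check that $\psi^{\bbar\gamma} - \xi^{\bbar}\xi^{\gamma}$ has the two properties characterizing the inverse Levi form in the full frame $\theta^k = dz^k - \xi^k\partial\rho$: that it annihilates the $\partial\rho$-direction and inverts $\rho_{\delta\bbar}$ on the complementary CR subspace. The first is immediate, since $\psi^{\bbar\gamma}\rho_\gamma = \xi^{\bbar}$ and $\xi^\gamma\rho_\gamma = 1$ give $(\psi^{\bbar\gamma}-\xi^{\bbar}\xi^\gamma)\rho_\gamma = 0$. For the second, writing $\rho_{\delta\bbar} = \psi_{\delta\bbar} - (1-r[\rho])\rho_\delta\rho_{\bbar}$ and contracting,
\[
\rho_{\delta\bbar}\bigl(\psi^{\bbar\gamma}-\xi^{\bbar}\xi^\gamma\bigr) = \delta_\delta^\gamma - (1-r[\rho])\rho_\delta\xi^\gamma - r[\rho]\rho_\delta\xi^\gamma = \delta_\delta^\gamma - \rho_\delta\xi^\gamma,
\]
which is precisely the projection onto the CR subspace along $\xi$. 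I expect the step requiring the most care to be reconciling the index ranges here: $\psi^{\bbar\gamma}$ and $\xi^{\bbar}\xi^\gamma$ are assembled from the full $(n+1)\times(n+1)$ data, whereas the Levi matrix $h_{\alpha\bbar}$ in \eqref{e:levimatrix} is the reduced $n\times n$ object obtained after eliminating the last coordinate via $\rho_k\theta^k=0$. The substance of the identity is that the full-index matrix, being annihilated by $\rho$ and inverting the Levi form on the remaining $n$ directions, descends to exactly the reduced inverse $h^{\bbar\gamma}$; performing this reduction cleanly, while keeping the symmetric relation $\rho_{\bbar}\psi^{\bbar\gamma}=\xi^\gamma$ straight, is the only delicate bookkeeping, everything else being the two short contractions above.
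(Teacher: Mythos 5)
Your proposal is correct, and for the first two assertions it coincides with the paper's own argument: the same rank-one determinant identity
\begin{equation*}
\det[\psi_{j\kbar}] = \det H[\rho] + (1-r)\rho_{\kbar}\phi^{\kbar j}\rho_j = r J[\rho] + (1-r)J[\rho] = J[\rho]
\end{equation*}
(the paper, like you, uses $\rho_{\kbar}\phi^{\kbar j}\rho_j = J[\rho]$, valid on $M$ where $\rho=0$), and the same short contraction for $\xi^{\kbar}=\rho_j\psi^{\kbar j}$ --- the paper runs it as $r\rho_j\psi^{j\lbar} = \rho_{j\kbar}\xi^{\kbar}\psi^{j\lbar} = \xi^{\lbar}-(1-r)\rho_j\psi^{j\lbar}$ and solves, while you check $\psi_{l\kbar}\xi^{\kbar}=\rho_l$ and invert; these are the same computation read in opposite directions, and neither divides by $r$, so your remark about the degenerate case is not actually a point of contrast.

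Where you genuinely differ is the identity $h^{\bbar\gamma}=\psi^{\bbar\gamma}-\xi^{\bbar}\xi^{\gamma}$. The paper substitutes the explicit coordinate expression \eqref{e:levimatrix} for $h_{\gamma\abar}$, contracts $\psi^{\bbar\gamma}h_{\gamma\abar}$ and $\xi^{\gamma}h_{\gamma\abar}$ separately, and lets the two resulting $\xi^{\bbar}Z_{\abar}\log\rho_w$ terms cancel at the end. You instead establish the full-index projection identities for $P^{\kbar j}:=\psi^{\kbar j}-\xi^{\kbar}\xi^j$, namely $P^{\kbar j}\rho_j=0$ and $\rho_{l\kbar}P^{\kbar j}=\delta^j_l-\rho_l\xi^j$, and then descend to the reduced $n\times n$ matrix; this avoids the $\log\rho_w$ bookkeeping entirely. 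Two remarks. First, as you yourself note, your displayed formulas are only true when the contracted indices run over the full range $1,\dots,n+1$: with Greek-only sums, $(\psi^{\bbar\gamma}-\xi^{\bbar}\xi^{\gamma})\rho_{\gamma}$ does not vanish (it equals $-\rho_w P^{\bbar w}$). Second, the descent you leave as ``delicate bookkeeping'' is in fact three lines, so there is no gap: write $h_{\gamma\abar}=\rho_{j\kbar}Z^j_{\gamma}Z^{\kbar}_{\abar}$, where $Z^j_{\gamma}=\delta^j_{\gamma}-(\rho_{\gamma}/\rho_w)\delta^j_w$ are the coefficients of the tangential field $Z_{\gamma}$. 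The relation $P^{\bbar k}\rho_k=0$ gives precisely $P^{\bbar\gamma}Z^j_{\gamma}=P^{\bbar j}$, i.e.\ the Greek sum reconstitutes the full sum, hence
\begin{equation*}
P^{\bbar\gamma}h_{\gamma\abar} = P^{\bbar j}\rho_{j\kbar}Z^{\kbar}_{\abar} = \bigl(\delta^{\bbar}_{\kbar}-\xi^{\bbar}\rho_{\kbar}\bigr)Z^{\kbar}_{\abar} = \delta^{\bbar}_{\abar},
\end{equation*}
using the conjugate of your projection identity, the tangentiality $\rho_{\kbar}Z^{\kbar}_{\abar}=0$, and $Z^{\bbar}_{\abar}=\delta^{\bbar}_{\abar}$ for Greek $\bbar$. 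With this supplement your argument is complete, and it is arguably cleaner than the paper's term-by-term expansion.
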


\begin{proof}
	That $\psi_{j\kbar}$ is invertible is already observed in \cite{LM82}. In fact, by a simple formula for determinant of rank-one perturbations of a matrix,
	\begin{align}
	\det [\psi_{j\kbar}] 
	& = 
	\det H[\rho] + (1-r)\rho_{\kbar}\,\phi^{\kbar j} \rho_j \notag \\
	& = 
	\det H[\rho] + (1-r) J[\rho] = J[\rho].
	\end{align}
	This implies that $\psi_{j\kbar}$ is invertible since $J[\rho] > 0$ on $M$. 
	
	Since $r[\rho] \rho_j = \rho_{j\kbar} \xi^{\kbar}$ and $\rho_{\kbar}\xi^{\kbar} = 1$, we obtain
	\begin{align}
	r\rho_j \psi^{j\lbar}
	= 
	\rho_{j\kbar} \xi^{\kbar}\psi^{j\lbar} 
	= 
	\xi^{\kbar}(\delta_{\kbar}^{\lbar} - (1-r)\rho_{j}\rho_{\kbar}\psi^{j\lbar}) 
	= 
	\xi^{\lbar} - (1-r)\rho_j\psi^{j\lbar}
	\end{align}
	Therefore,
	\begin{equation}
	\psi^{j\lbar} \rho_{j} = \xi^{\lbar}.
	\end{equation}
	To prove \eqref{e:inverselevi}, with the notation $w=n+1$, we note that
	\begin{equation}
	\xi^{\gamma}h_{\gamma\abar} = -Z_{\abar}\log \rho_{w}.
	\end{equation}
	On the other hand,
	\begin{align}
	\psi^{\bbar\gamma} \rho_{\gamma\abar}
	& = 
	\psi^{\bbar j} \rho_{j\abar} - \psi^{\bbar w}\rho_{w\abar} \notag \\
	& = 
	\delta_{\abar}^{\bbar} - (1-r)\rho_{\abar}\rho_{j}\psi^{\bbar j} - \psi^{\bbar w}\rho_{w\abar} \notag \\
	& = 
	\delta_{\abar}^{\bbar} - (1-r)\rho_{\abar}\xi^{\bbar} - \psi^{\bbar w}\rho_{w\abar}.
	\end{align}
	Here as usual, repeated Latin indices are summed from $1$ to $n+1$ and Greek indices are summed from $1$ to $n$. Also by direct calculation,
	\begin{align}
	\frac{\psi^{\bbar\gamma} \rho_{\gamma}\rho_{w\abar}}{\rho_{w}}
	& = 
	\frac{\rho_{w\abar} \xi^{\bbar} \rho_{\wbar}}{|\rho_w|^2} - \psi^{\bbar w}\rho_{w\abar},\\
	\frac{\psi^{\bbar\gamma} \rho_{\abar} \rho_{\gamma\wbar}}{\rho_{\wbar}}
	& = 
	-\frac{\psi^{\bbar w}\rho_{w\wbar}\rho_{\abar}}{\rho_{\wbar}}- (1-r) \rho_{\abar} \xi^{\bbar},\\
	\frac{\psi^{\bbar\gamma}\rho_{\abar}\rho_{\gamma}\rho_{w\wbar}}{|\rho_w|^2} 
	& = 
	\frac{\rho_{\abar} \xi^{\bbar}\rho_{w\wbar}}{|\rho_w|^2} - \frac{\rho_{\abar}\rho_{w\wbar} \psi^{\bbar w}}{\rho_{\wbar} }.
	\end{align}
	Thus,
	\begin{align}
	\psi^{\bbar\gamma}h_{\gamma\abar}
	= 
	\delta_{\abar}^{\bbar}
	+
	\frac{\rho_{\abar}\xi^{\bbar}\rho_{w\wbar} - \rho_{w\abar} \xi^{\bbar} \rho_{\wbar}}{|\rho_w|^2} 
	= 
	\delta_{\abar}^{\bbar} 
	-
	\xi^{\bbar} Z_{\abar} \log \rho_{w}
	\end{align}
	Therefore,
	\begin{equation*}
	(\psi^{\bbar\gamma} - \xi^{\bbar}\xi^{\gamma})\, h_{\gamma\abar} = \delta_{\abar}^{\bbar}.\qedhere
	\end{equation*}
\end{proof}
\section{An explicit formula for the Kohn-Laplacian and the proof of Theorem~\ref{thm:ub}}
Explicit formulas for the Kohn-Laplacian have been given in \cite{DLL}; see also \cite{LM82}. Here we prove the formula given in \cite{DLL} in a more general situation. We shall use the notations as in Section~2.

\begin{proposition} Let $ \widetilde{\Delta}_{\rho}$ be the degenerate second order real differential operator
	\begin{equation}
	\widetilde{\Delta}_{\rho} = (\xi^j \xi^{\kbar} - \psi^{\kbar j}) \partial_{j}\partial_{\kbar}.
	\end{equation}
	The Kohn-Laplacian and the sub-Laplacian on $(M,\theta)$ are given by
	\begin{align}\label{e:ks}
	\Box_b f
	=
	\widetilde{\Delta}_{\rho} f + n\, \bar{\xi} f,
	\quad \tfrac{1}{2}\Delta_b u
	=
	\widetilde{\Delta}_{\rho} u + n\, N_{\rho} u,
	\end{align}
	where $f$ is complex-valued and $u$ is real-valued.
\end{proposition}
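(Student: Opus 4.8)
The plan is to derive the formula for $\Box_b$ directly from the definitions $\Box_b = \bar\partial_b^{\ast}\bar\partial_b$ and then obtain the sub-Laplacian identity as the ``real part'' analogue. First I would express $\bar\partial_b f$ in the coframe $\{\theta^{\alpha}\}$ of \eqref{e:cframe}. Since the dual frame to $\{\theta^{\alpha}, \theta^{\bar\gamma}, \theta\}$ consists of the $(1,0)$ vector fields $Z_{\alpha}$, their conjugates $Z_{\bar\gamma}$, and the Reeb field $T$, I would write $\bar\partial_b f = (Z_{\bar\gamma} f)\,\theta^{\bar\gamma}$ and then compute $\bar\partial_b^{\ast}$ using the volume form $dV = \theta\wedge(d\theta)^n$ and the Levi matrix $h_{\alpha\bar\beta}$. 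The adjoint of $\bar\partial_b$ acting on a $(0,1)$-form $g_{\bar\gamma}\theta^{\bar\gamma}$ is $-h^{\bar\gamma\alpha}(Z_{\alpha} g_{\bar\gamma}) + (\text{lower-order curvature terms})$, where the correction terms come from the connection forms $\omega_{\beta}{}^{\alpha}$ of \eqref{e:cf}. Composing gives $\Box_b f = -h^{\bar\gamma\alpha} Z_{\alpha} Z_{\bar\gamma} f + (\text{first-order terms})$.

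The key computational step is to identify the principal symbol and the first-order part with the expressions in \eqref{e:ks}. For the second-order part I would invoke the inverse Levi matrix formula \eqref{e:inverselevi}, namely $h^{\bar\gamma\alpha} = \psi^{\bar\gamma\alpha} - \xi^{\bar\gamma}\xi^{\alpha}$, and re-express $Z_{\alpha}Z_{\bar\gamma}$ in terms of the ambient derivatives $\partial_j\partial_{\bar k}$. Here I expect the cancellation of the off-frame ($w$-direction) derivatives against the $\rho_{\alpha}/\rho_w$ correction terms in $Z_{\alpha}$ to reproduce exactly the operator $(\psi^{\bar k j} - \xi^j\xi^{\bar k})\partial_j\partial_{\bar k} = -\widetilde\Delta_{\rho}$, matching the definition of $\widetilde\Delta_{\rho}$. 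The first-order terms must then collapse to $n\,\bar\xi f$; I would track these using the explicit contraction of the connection forms, anticipating that the trace $h^{\bar\gamma\alpha} h_{\beta\bar\gamma} = \delta^{\alpha}_{\beta}$ and the relation $\xi^{\gamma}h_{\gamma\bar\alpha} = -Z_{\bar\alpha}\log\rho_w$ from the preliminaries will be the algebraic engine producing the factor $n$.

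The sub-Laplacian formula follows by the same scheme applied to $\Delta_b = -(Z_{\alpha}Z^{\alpha} + Z_{\bar\alpha}Z^{\bar\alpha})$ (up to sign conventions) on real-valued $u$. Since $\Box_b$ and its conjugate $\bar\Box_b = \partial_b^{\ast}\partial_b$ satisfy $\Box_b + \bar\Box_b = \Delta_b$ on functions (modulo a Reeb-derivative term that vanishes on real functions or combines into the $N_{\rho}$ term), I would compute $\bar\Box_b u = \widetilde\Delta_{\rho} u + n\,\xi u$ by conjugation and add. The $\bar\xi + \xi$ on the first-order part then assembles into $2N_{\rho}$, giving $\tfrac12\Delta_b u = \widetilde\Delta_{\rho}u + n\,N_{\rho}u$ after dividing by two.

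The main obstacle I anticipate is the bookkeeping of the first-order terms: the operator $Z_{\alpha}Z_{\bar\gamma}$ does not commute, and expanding it in ambient coordinates produces genuine first-order contributions from differentiating the coefficients $\rho_{\bar\gamma}/\rho_{\bar w}$ inside $Z_{\bar\gamma}$, as well as from the connection forms in $\bar\partial_b^{\ast}$. Verifying that all these terms conspire to give precisely $n\,\bar\xi f$ with no residual second-order error requires careful use of Propositions~2.1 and~2.3 together with the identity $h_{\alpha\bar\beta}\xi^{\alpha} = -Z_{\bar\beta}\log\rho_{\overline{n+1}}$; this is where the strict pseudoconvexity and the defining-function identities are essential, and where a sign or index slip would be easy to make.
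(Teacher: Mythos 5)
Your proposal follows essentially the same route as the paper's proof: expand $\Box_b=\bar{\partial}_b^{\ast}\bar{\partial}_b$ in the admissible coframe $\{\theta^{\alpha}\}$, use the inverse Levi matrix formula \eqref{e:inverselevi} to rewrite $h^{\bbar\gamma}Z_{\gamma}$ as the ambient operator $\psi^{\bbar k}\partial_k-\xi^{\bbar}\xi^k\partial_k$, extract the factor $n$ from the connection-form contraction $\omega_{\bbar}{}^{\sbar}(Z_{\gamma})=\xi^{\sbar}h_{\bbar\gamma}$ traced against $h^{\bbar\gamma}$, and then get the sub-Laplacian statement from $\Delta_b u=\Box_b u+\overline{\Box_b u}$ for real $u$. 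The paper starts from the covariant formula $-\Box_b f=h^{\bbar\gamma}\bigl(Z_{\gamma}Z_{\bbar}f-\omega_{\bbar}{}^{\sbar}(Z_{\gamma})Z_{\sbar}f\bigr)$ (which is exactly what your adjoint computation produces) and carries out the same first-order bookkeeping you describe, so your plan is correct and matches the paper's argument.
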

\begin{proof}
	The proof is similar to that of Proposition 2.1 in \cite{DLL} which, in turns, 
	uses the formula for the Tanaka-Webster connection forms obtained by Li-Luk~\cite{LL}. Here we work in a more general case with $\rho$ is not assumed to be strictly plurisubharmonic.
	
	First, notice that
	\begin{align}
	Z^{\bbar} = h^{\bbar\gamma} Z_{\gamma}
	& = 
	(\psi^{\bbar\gamma} - \xi^{\bbar}\xi^{\gamma})\left(\partial_{\gamma} - \frac{\rho_{\gamma}}{\rho_{w}}\partial_w\right) \notag\\
	& = 
	\psi^{\bbar\gamma} \partial_{\gamma} 
	-
	\frac{\psi^{\bbar\gamma}\rho_{\gamma}}{\rho_w}\partial_w 
	-
	\xi^{\bbar} \xi^{\gamma} \partial_{\gamma}
	+
	\frac{\xi^{\bbar}\xi^{\gamma}\rho_{\gamma}}{\rho_w}\partial_w \notag\\
	& = 
	\psi^{\bbar\gamma} \partial_{\gamma} 
	-
	\left(\frac{\xi^{\bbar} - \psi^{\bbar w}\rho_{w}}{\rho_w}\right)
	\partial_{w}
	-
	\xi^{\bbar} \xi^{\gamma} \partial_{\gamma}
	+
	\left(\frac{\xi^{\bbar}(1-\xi^{w}\rho_w)}{\rho_w}\right)\partial_w \notag\\
	& = 
	\psi^{\bbar\gamma} \partial_{\gamma} + \psi^{\bbar w}\partial_w - \xi^{\bbar} \xi^{\gamma}\partial_{\gamma} - \xi^{\bbar} \xi^{w}\partial_{w} \notag\\
	& = 
	\psi^{\bbar k}\partial_{k} - \xi^{\bbar} \xi^k \partial_k.
	\end{align}
	By \eqref{e:cf}, (2.18), (2.7), and $\omega_{\bbar}{}^{\sbar}(Z_{\gamma}) = \xi^{\sbar} h_{\bbar\gamma}$, we obtain
	\begin{align}
	-\Box_b f
	& = 
	h^{\bbar\gamma} \left(Z_{\gamma}Z_{\bbar} f - \omega_{\bbar}{}^{\sbar}(Z_{\gamma}) Z_{\sbar} f\right)\notag\\
	& = 
	Z^{\bbar}Z_{\bbar} f - h^{\bbar\gamma}\xi^{\sbar} h_{\bbar\gamma} Z_{\sbar} f \notag \\
	& = 
	\left[\psi^{\bbar k}\partial_{k} - \xi^{\bbar} \xi^k \partial_k\right] \left[f_{\bbar} - \frac{\rho_{\bbar}}{\rho_{\wbar}} f_{\wbar}\right]
	-n \xi^{\sbar} Z_{\sbar} f \notag \\
	& = -\tilde{\Delta} f-
	\left[\psi^{\wbar k}f_{k\wbar} - \xi^{\wbar} \xi^k f_{k\wbar}\right] \notag\
	-\left[\psi^{\bbar k}f_{k\wbar} - \xi^{\bbar} \xi^k f_{k\wbar} \right] \frac{\rho_{\bbar}}{\rho_{\wbar}} \\
	& \quad -{1\over \rho_{\wbar}}
	\left[\psi^{\bbar k}\rho_{k\bbar} - \xi^{\bbar} \xi^k \rho_{k\bbar}\right] f_{\wbar} 
	+{1\over \rho_{\wbar}^2} \left[\psi^{\bbar k}\rho_{k \wbar} - \xi^{\bbar} \xi^k \rho_{k\wbar}\right] \rho_{\bbar} f_{\wbar}-n \xi^{\sbar} Z_{\sbar} f\notag\\
	& = -\tilde{\Delta} f+ \xi^k f_{k\wbar} \left[ \xi^{\wbar} 
	- \frac{1}{\rho_{\wbar}} + \xi^{\bbar} \frac{\rho_{\bbar}}{\rho_{\wbar}}\right] \notag\\
	& \quad -{f_{\wbar} \over \rho_{\wbar}}\Big(
	\left[\psi^{\bbar k} - \xi^{\bbar} \xi^k \right] \rho_{k\bbar} 
	-{1\over \rho_{\wbar}} \left[\psi^{\bbar k} - \xi^{\bbar} \xi^k \right] \rho_{k\wbar} \rho_{\bbar} \Big) -n \xi^{\sbar} Z_{\sbar} f \notag\\
	& = -\tilde{\Delta} f -n \xi^{\sbar} Z_{\sbar} f \notag\\
	& \quad - {f_{\wbar} \over \rho_{\wbar}}\Big( n-(1-r) \xi^{\bbar} \rho_{\bbar} - \xi^{\bbar} r \rho_{\bbar} 
	+{\rho_{\bbar} \over \rho_{\wbar}} \left[(1-r) \xi^{\bbar} \rho_{\wbar} + \xi^{\bbar} r \rho_{ \wbar}\right] \Big) \notag \\
	& = -\tilde{\Delta} f -n \xi^{\sbar} f_{\sbar} +n \xi^{\sbar} {\rho_{\sbar} \over \rho_{\wbar}} f_{\wbar}
	- {f_{\wbar} \over \rho_{\wbar}}\Big( n - \xi^{\bbar} r \rho_{\bbar} 
	+\rho_{\bbar} \xi^{\bbar} r \Big) \notag\\
	& = -\tilde{\Delta} f -n \xi^{\sbar} f_{\sbar} -n \xi^{\wbar} f_{\wbar}.
	\end{align}
	This proves the formula for $\Box_b$ in \eqref{e:ks}. The one for $\Delta_b$ can be obtained by noting that $\Delta_b u = \Box_b u + \overline{\Box_b u}$ ($u$ is real-valued). 
\end{proof}
\begin{proof}[Proof of Theorem~\ref{thm:ub}] Differentiate \eqref{e:13}, we obtain
	\begin{equation}
	N\nu^{N-1} \rho_j - \psi_j
	=
	\sum_{\mu = 1}^{K} f_j^{(\mu)} \fbar^{(\mu)},
	\quad
	N\nu^{N-2}(\nu\rho_{j\kbar} + (N-1)\rho_j\rho_{\kbar})
	=
	\sum_{\mu = 1}^{K} f_j^{(\mu)} \fbar^{(\mu)}_{\kbar}.
	\end{equation} 
	On the other hand, by the formula for the Kohn-Laplacian in \eqref{e:ks}, 
	\(\Box_b \bar{f}^{(\mu)} =n\xi^{\kbar} \bar{f}^{(\mu)}_{\kbar}\). Therefore,
	\begin{equation}
	|\Box_b \bar{f}^{(\mu)}|^2 = n^2 \xi^{\kbar} \bar{f}^{(\mu)}_{\kbar} \xi^{l} f^{(\mu)}_{l}.
	\end{equation}
	Summing over $\mu=1,2,\dots , K$, we obtain
	\begin{align}
	\sum_{\mu=1}^K |\Box_b \bar{f}^{(\mu)}|^2
	& =
	n^2 \xi^{\kbar}\xi^l \sum_{\mu=1}^K \bar{f}^{(\mu)}_{\kbar} f^{(\mu)}_{l} \notag \\
	& =
	n^2 \xi^{\kbar}\xi^{l}\left(N\nu^{N-1} \rho_{l\kbar} + N(N-1)\nu^{N-2}\rho_j \rho_{\kbar}\right) \notag \\
	& =
	n^2 N\nu^{N-2}\,\left(\nu\, r + N-1\right).
	\end{align}	
	Next, observe that by \eqref{e:levimatrix} (assuming $\rho_w \ne 0$)
	\begin{align}
	\sum_{\mu = 1}^{K} Z_{\bar{\gamma}}\bar{f}^{(\mu)} Z_{\sigma} {f}^{(\mu)}%
	& = 
	\sum_{\mu = 1}^{K} \left(\bar{f}^{(\mu)}_{\bar{\gamma}} - \frac{\rho_{\bar{\gamma}}}{\rho_{\bar{w}}} \bar{f}^{(\mu)}_{\wbar}\right) 
	\left(f^{(\mu)}_{\sigma} - \frac{\rho_{\sigma}}{\rho_{w}} f^{(\mu)}_{w}\right) \notag \\
	& = 
	\sum_{\mu = 1}^{K} 
	\bar{f}^{(\mu)}_{\bar{\gamma}} f^{(\mu)}_{\sigma}
	-
	\frac{\rho_{\bar{\gamma}}}{\rho_{\bar{w}}} 
	\sum_{\mu = 1}^{K} \bar{f}^{(\mu)}_{\wbar} f^{(\mu)}_{\sigma}
	-
	\frac{\rho_{\sigma}}{\rho_{w}} \sum_{\mu = 1}^{K} f^{(\mu)}_{w}
	\bar{f}^{(\mu)}_{\bar{\gamma}}
	+
	\frac{\rho_{\bar{\gamma}}\rho_{\sigma}}{|\rho_w|^2} 
	\sum_{\mu = 1}^{K}f^{(\mu)}_{w} \bar{f}^{(\mu)}_{\wbar} \notag \\
	& = 
	N\nu^{N-1}\left(\rho_{\sigma\gamabar} - \frac{\rho_{\gamabar}\rho_{\sigma \wbar}}{\rho_{\wbar}} - \frac{\rho_{\sigma}\rho_{\gamabar w}}{\rho_{w}} + \frac{\rho_{\gamabar}\rho_{\sigma}\rho_{w\wbar}}{|\rho_w|^2}\right) \notag \\
	& =
	N \nu^{N-1} h_{\sigma\bar{\gamma}}.
	\end{align}
	Therefore,
	\begin{equation}
	\sum_{\mu = 1}^{K} |\bar{\partial}_b \bar{f}^{(\mu)}|^2
	= 
	h^{\sigma\bar{\gamma}}\sum_{\mu = 1}^{K} Z_{\bar{\gamma}}\bar{f}^{(\mu)} Z_{\sigma} {f}^{(\mu)}
	= n N\nu^{N-1}.
	\end{equation}
	Applying Corollary~3.2 in \cite{DLL}, we obtain,
	\begin{equation}
	\lambda_1
	\leq 
	\min_{\mu}
	\frac{\|\Box_b \bar{f}^{(\mu)}\|^2}{\|\bar{\partial}_b \bar{f}^{(\mu)}\|^2}
	\leq 
	\frac{\sum_{\mu=1}^K \|\Box_b \bar{f}^{(\mu)}\|^2}{\sum_{\mu=1}^K\|\bar{\partial}_b \bar{f}^{(\mu)}\|^2}
	=
	\frac{n}{v(M)}\int_M r\, \theta \wedge (d\theta)^n + \frac{n(N-1)}{\nu}.
	\end{equation}
	This proves the inequality.
	
	Next, assume that the equality occurs in \eqref{e:1.4} and $b^{(\mu)}: = \Box_b \fbar^{(\mu)}$. Then by inspecting the proof
	of Corollary~3.2 in \cite{DLL}, $b^{(\mu)}$ must be orthogonal to the eigenfunctions corresponding to
	$\lambda_k$ for all $k\geq 2$. Also, it is clear that $b^{(\mu)}$ is orthogonal to $\ker \Box_b$. Consequently, $b^{(\mu)}$ must be in the eigenspace that corresponds to $\lambda_1$, i.e., $\Box_b b^{(\mu)} = \lambda_1 b^{(\mu)}$.
	
	Finally, $b^{(\mu)}$ is non-trivial, as otherwise, $\fbar^{(\mu)}$ must be CR and hence $f^{(\mu)}$ must be a constant. This is a contradiction.
\end{proof}
We remark that \eqref{e:ks} generalizes a formula in \cite{DLL}: We do not assume here that $\rho$ is strictly plurisubharmonic. Using this formula, we can slightly improves Theorem 1.1 in \cite{DLL} as follows.
\begin{theorem}
	Suppose $M$ is a compact strictly pseudoconvex hypersurface given by $\rho=0$ with the transverse curvature $r[\rho] \geq 0$. Assume that for some $j$,
	\begin{align}\label{special}
	\Re \left( n\,r[\rho]\, \rho_{\jbar}\tilde{\Delta}_{\rho}\, \rho_{j} + |\widetilde{\Delta}_{\rho}\, \rho_j|^2\right) \leq 0 \quad \text{on} \ M.
	\end{align}
	Then
	\begin{align}\label{specialbound}
	\lambda_1(M,\theta) \leq n\max_M r[\rho].
	\end{align}
	and the equality holds only if the transverse curvature $r[\rho]$ is constant along $M$.
\end{theorem}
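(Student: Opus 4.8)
The plan is to feed the single test function $u=\rho_j$ into the variational bound $\lambda_1\le \|\Box_b u\|^2/\|\bar{\partial}_b u\|^2$ (Corollary~3.2 of \cite{DLL}, valid for any $u$ with $\bar{\partial}_b u\neq 0$), where $j$ is the index for which \eqref{special} holds. First I would compute $\Box_b\rho_j$ from \eqref{e:ks}. Since $\bar{\xi}\rho_j=\xi^{\kbar}\rho_{j\kbar}=r[\rho]\,\rho_j$ by the identity $\xi^{\kbar}\rho_{j\kbar}=r[\rho]\rho_j$ established in Section~2, the formula for the Kohn-Laplacian gives
\begin{equation*}
\Box_b\rho_j=\widetilde{\Delta}_{\rho}\rho_j+n\,r[\rho]\,\rho_j.
\end{equation*}
Write $E:=\widetilde{\Delta}_{\rho}\rho_j$, $r:=r[\rho]$, and $g:=\Box_b\rho_j=E+nr\rho_j$. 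Then the numerator density is $|g|^2=|E|^2+2nr\,\Re(\rho_{\jbar}E)+n^2r^2|\rho_j|^2$, while, using $\|\bar{\partial}_b\rho_j\|^2=\langle \Box_b\rho_j,\rho_j\rangle$, the denominator is the integral over $M$ of the density $\Re(g\,\rho_{\jbar})=\Re(\rho_{\jbar}E)+nr|\rho_j|^2$.

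The crux is a pointwise inequality extracted from \eqref{special}. Rewriting \eqref{special} as $|E|^2\le -nr\,\Re(\rho_{\jbar}E)$ and applying Cauchy--Schwarz $-\Re(\rho_{\jbar}E)\le|\rho_j|\,|E|$ (here $r\ge 0$ is used), I obtain $|E|\le nr|\rho_j|$. This in turn forces $\Re(\rho_{\jbar}E)\ge -|\rho_j|\,|E|\ge -nr|\rho_j|^2$, so the denominator density is nonnegative: $\Re(g\,\rho_{\jbar})\ge 0$ everywhere. Setting $R:=\max_M r$ and combining \eqref{special} (which contributes the nonnegative slack $s:=-|E|^2-nr\,\Re(\rho_{\jbar}E)$) with $r\le R$, a direct expansion yields, at every point,
\begin{equation*}
nR\,\Re(g\,\rho_{\jbar})-|g|^2=n(R-r)\,\Re(g\,\rho_{\jbar})+s\ \ge\ 0.
\end{equation*}
Integrating this over $M$ gives $\|\Box_b\rho_j\|^2=\int_M|g|^2\le nR\int_M\Re(g\,\rho_{\jbar})=nR\,\langle\Box_b\rho_j,\rho_j\rangle=nR\,\|\bar{\partial}_b\rho_j\|^2$, and the variational bound then yields $\lambda_1\le nR=n\max_M r[\rho]$, which is \eqref{specialbound}. (One needs $\bar{\partial}_b\rho_j\not\equiv0$, for otherwise $\rho_j$ is CR and the test function is unavailable.)

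For the equality case I would argue as follows. Since $\lambda_1\le \|\Box_b\rho_j\|^2/\|\bar{\partial}_b\rho_j\|^2\le nR$, equality $\lambda_1=nR$ forces both inequalities to be equalities. As in the proof of Theorem~\ref{thm:ub}, equality in the variational bound makes $g=\Box_b\rho_j$ a nonzero eigenfunction for $\lambda_1$. Equality in the integrated pointwise estimate forces the density $n(R-r)\Re(g\,\rho_{\jbar})+s$ to vanish almost everywhere; hence $s=0$ and $(R-r)\Re(g\,\rho_{\jbar})=0$. Tracing the Cauchy--Schwarz equality where $r<R$ gives $E=-nr\rho_j$, so $g=0$ on the open set $\{r<R\}$. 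The main obstacle is then to upgrade this to $r\equiv R$: a nonzero $\lambda_1$-eigenfunction $g$ cannot vanish on a nonempty open set, so $\{r<R\}=\emptyset$ and $r[\rho]$ is constant. I expect this last step---justifying a unique continuation property for $\Box_b$-eigenfunctions---to be the delicate point, whereas the upper bound itself follows cleanly once the sign $\Re(\rho_{\jbar}\Box_b\rho_j)\ge 0$ is read off from \eqref{special}.
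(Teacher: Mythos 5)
Your proposal reconstructs what is evidently the intended argument: the paper itself gives no proof of this theorem (it is presented as a slight improvement of Theorem~1.1 of \cite{DLL}, obtained from the generalized formula \eqref{e:ks}), and your route --- testing the variational bound of \cite[Corollary 3.2]{DLL} with $u=\rho_j$, computing $\Box_b\rho_j=\widetilde{\Delta}_{\rho}\rho_j+n\,r[\rho]\rho_j$ from \eqref{e:ks} together with the conjugate of the identity $r[\rho]\rho_{\kbar}=\rho_{j\kbar}\xi^j$ of Proposition~2.1, and then exploiting \eqref{special} pointwise --- is exactly that argument. Your algebra is correct: with $g=\Box_b\rho_j$ and $E=\widetilde{\Delta}_{\rho}\rho_j$ one has $s:=-|E|^2-nr\Re(\rho_{\jbar}E)=nr\Re(g\rho_{\jbar})-|g|^2=-\Re(\bar{g}E)$, the Cauchy--Schwarz step gives $\Re(g\rho_{\jbar})\geq 0$, and integrating $nR\Re(g\rho_{\jbar})-|g|^2=n(R-r)\Re(g\rho_{\jbar})+s\geq 0$ yields \eqref{specialbound}. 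One loose end you flagged but left open must actually be closed, since it is not among the hypotheses: $\bar{\partial}_b\rho_j\not\equiv 0$. This always holds, by compactness and strict pseudoconvexity. Pick $k\neq j$ (possible as $n\geq1$) and let $p\in M$ maximize $\Re z^k$; then $d(\Re z^k)(p)$ is a real multiple of $d\rho(p)$, comparing $(1,0)$-parts gives $\rho_l(p)=0$ for $l\neq k$, in particular $\rho_j(p)=0$. Hence $\partial/\partial\bar{z}^j\in T^{0,1}_pM$, and $(\bar{\partial}_b\rho_j)(\partial/\partial\bar{z}^j)|_p=\rho_{j\jbar}(p)$, which is the Levi form of the tangent vector $\partial/\partial z^j$ at $p$ and is therefore positive. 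So $\rho_j$ is never CR and the test function is always available.

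The equality case, however, contains a genuine gap, which you yourself identify. Your reduction is sound: equality forces $g$ to be a nonzero $\lambda_1$-eigenfunction, $s\equiv 0$, and $(R-r)\Re(g\rho_{\jbar})\equiv 0$, whence $g\equiv 0$ on the open set $\{r<R\}$. But the concluding step --- ``a nonzero $\lambda_1$-eigenfunction cannot vanish on a nonempty open set'' --- is a weak unique continuation property for $\Box_b$ that cannot simply be invoked. The Kohn--Laplacian is not elliptic; on functions it is a subelliptic operator with a first-order term in the characteristic (Reeb) direction, $M$ is only assumed smooth (so analytic-hypoellipticity results for real-analytic strictly pseudoconvex CR manifolds do not apply), and unique continuation is known to fail for general operators of sum-of-squares type. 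So the implication ``equality $\Rightarrow$ $r[\rho]$ constant'' is not established by your write-up. A natural attempt to bypass this is to exploit the structure forced on $\{r<R\}$: there $\rho_j$ agrees with a global CR function, hence $\bar{\partial}_b\rho_j=0$, and positivity of the Levi form then forces $\rho_j\neq 0$ and $\partial/\partial z^j=\rho_j\,\xi$ on that set; but it is not clear that this produces a contradiction, so as it stands the equality assertion is the unproved part of your proposal (and is precisely the point one would need to check against the proof of Theorem~1.1 in \cite{DLL}, to which the paper defers).
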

We end this section by the following example showing that the constancy of the transverse curvature 
does not implies the equality in the estimates, even in the case $M$ is a sphere.
\begin{example}\label{ex:1}\rm 
	The unit sphere $\mathbb{S}^3$ in $\mathbb{C}^2$ can be defined by $\rho = 0$ with
	\begin{equation}
	\rho = |z^2|^2 + 2|zw|^2 + |w^2|^2 -1.
	\end{equation}
	Observe that on $\mathbb{S}^3$, $\det H[\rho] = 8$, $J[\rho] = 4$, and the transverse curvature is constant: $r[\rho] = 2$.	Since $\partial\rho = 2(\zbar dz + \wbar dw)$ on $\mathbb{S}^3$, the pseudohermitian $\theta: = (i/2)(\bar{\partial} \rho - \partial\rho)$ is twice of the standard pseudohermitian structure on $\mathbb{S}^3$ and hence $\lambda_1(\mathbb{S}^3, \theta) = \tfrac{1}{2}$. Observe that $(\mathbb{S}^3,\theta)$ is CR immersed into $\mathbb{S}^5 \subset \mathbb{C}^{3}$ via H. Alexander's map $F(z,w) := (z^2, \sqrt{2}zw, w^2)$ and the Corollary~\ref{cor:1} applies. Thus, the constancy of $r[\rho]$ does \emph{not} implies that the equality occurs in \eqref{e:ub}.
\end{example}
\section{Webster scalar curvature and proofs of Theorem~\ref{thm:lb} and Corollary 1.4.}
In this section, we prove a formula for the Webster scalar curvature of the volume-normalized pseudohermitian structure of a real hypersurface.
\begin{proposition} \label{prop:wsc}
	Let $M$ be a strictly pseudoconvex hypersurface given by $\rho=0$ and $\theta$ a pseudohermitian structure given by \(\theta = (i/2) (\bar{\partial} \rho - \partial\rho)\). Then the Webster scalar curvature is given by 
	\begin{equation}
	R_{\theta}
	=
	n(n+1)r[\rho] - nN_{\rho} \log J[\rho] + \tfrac{1}{2}\Delta_b \log J[\rho],
	\end{equation}
	where $\Delta_b$ is the (positive) sublaplacian defined by $\theta$. In particular, if $J[\rho] = 1 + O(\rho^3)$, then
	\begin{equation}
	R_{\theta} = n(n+1)r[\rho].
	\end{equation}
\end{proposition}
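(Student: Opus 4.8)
The plan is to obtain $R_\theta$ by tracing the Webster Ricci tensor and then converting the resulting second-order operator into the sub-Laplacian. By definition the Webster scalar curvature is the Levi-trace of the Ricci tensor, $R_\theta = h^{\alpha\bbar}R_{\alpha\bbar}$. Inserting the Li--Luk formula of Proposition~\ref{prop:ll} and using $h^{\alpha\bbar}h_{\alpha\bbar}=n$, I would immediately arrive at
\[
R_\theta = -\,h^{\alpha\bbar}D^{\rho}_{\alpha\bbar}\log J[\rho] + n(n+1)\,r[\rho],
\]
so everything reduces to identifying the contracted operator $h^{\alpha\bbar}D^{\rho}_{\alpha\bbar}$.

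The central step is the operator identity $h^{\alpha\bbar}D^{\rho}_{\alpha\bbar} = -\widetilde{\Delta}_{\rho}$, understood as an identity of second-order operators. To prove it I would first observe that $D^{\rho}_{\alpha\bbar}$ is precisely the second-order part of $Z_\alpha Z_{\bbar}$, so that $h^{\alpha\bbar}D^{\rho}_{\alpha\bbar}$ is the second-order part of $\sum_{\beta}Z^{\bbar}Z_{\bbar}$, where $Z^{\bbar}=h^{\alpha\bbar}Z_\alpha = \psi^{\bbar k}\partial_k - \xi^{\bbar}\xi^{k}\partial_k$ was already computed in the proof of Proposition~3.1. Expanding the second-order part leaves a sum over a Greek index $\beta$ (running $1,\dots,n$) and a Latin index $k$ (running $1,\dots,n+1$), and the task is to complete each Greek sum to a full Latin sum. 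Here I would invoke the relations of Proposition~2.3 together with the normalization from Proposition~2.1, namely $\psi^{\bbar k}\rho_{\bbar}=\xi^{k}$ and $\rho_k\xi^{k}=1$ (and their conjugates) and $h^{\alpha\bbar}=\psi^{\alpha\bbar}-\xi^{\alpha}\xi^{\bbar}$. Splitting the contraction into its $\psi$-part and its $\xi\xi$-part, the missing $\beta=\overline{n+1}$ terms in each Greek sum are supplied exactly by the correction $\rho_{\bbar}/\rho_{\wbar}$ built into $Z_{\bbar}$, and the residual first-order pieces proportional to $u_{k\wbar}/\rho_{\wbar}$ cancel between the two parts. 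What remains is $\sum_{j,k}(\psi^{\kbar j}-\xi^{j}\xi^{\kbar})\partial_j\partial_{\kbar}=-\widetilde{\Delta}_{\rho}$.

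With this identity established, the formula $\tfrac12\Delta_b u = \widetilde{\Delta}_{\rho}u + nN_\rho u$ from Proposition~3.1 finishes the computation: it gives $\widetilde{\Delta}_{\rho}\log J = \tfrac12\Delta_b\log J - nN_\rho\log J$, and substituting yields
\[
R_\theta = n(n+1)r[\rho] - nN_\rho\log J[\rho] + \tfrac12\Delta_b\log J[\rho],
\]
as claimed. For the final assertion, if $J[\rho]=1+O(\rho^3)$ then $\log J[\rho]=O(\rho^3)$, so its first derivatives are $O(\rho^2)$ and its second derivatives are $O(\rho)$; hence both $N_\rho\log J$ and $\Delta_b\log J = 2\widetilde{\Delta}_{\rho}\log J + 2nN_\rho\log J$ vanish along $M$, leaving $R_\theta = n(n+1)r[\rho]$.

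I expect the main obstacle to be the bookkeeping in the operator identity of the second paragraph: carefully tracking which index sums run over $1,\dots,n$ versus $1,\dots,n+1$, and verifying that the $w$-correction terms combine precisely so that the Greek sums close up to Latin sums with the leftover first-order terms cancelling. Everything else is a direct substitution of the formulas for the Ricci tensor and for $\Delta_b$ already proved in the excerpt.
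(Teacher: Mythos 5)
Your proposal is correct and follows essentially the same route as the paper: the paper's proof also traces the Li--Luk Ricci formula of Proposition~\ref{prop:ll}, establishes the operator identity $-h^{\bbar\alpha}D^{\rho}_{\alpha\bbar} = (\xi^{j}\xi^{\kbar}-\psi^{\kbar j})\partial_j\partial_{\kbar} = \widetilde{\Delta}_{\rho}$ by exactly the Greek-to-Latin sum completion you describe (carried out there via three explicit contraction identities such as $(\psi^{\bbar\alpha}-\xi^{\bbar}\xi^{\alpha})\rho_{\alpha}/\rho_w = \xi^{\bbar}\xi^{w}-\psi^{\bbar w}$, using $\psi^{\kbar j}\rho_j=\xi^{\kbar}$ and $\rho_k\xi^k=1$), and then substitutes $\widetilde{\Delta}_{\rho}=\tfrac12\Delta_b - nN_{\rho}$ from \eqref{e:ks}. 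Your treatment of the case $J[\rho]=1+O(\rho^3)$ is also the intended (and correct) one.
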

\begin{proof}
	Observe that the Webster Ricci tensor has components,
	\begin{equation}\label{e:ricci}
	R_{\alpha\bbar}
	=
	-D^{\rho}_{\alpha\bbar} \log J[\rho] + (n+1) r[\rho]\, h_{\alpha\bbar}.
	\end{equation}
	On the other hand, since $\psi^{\bbar j } \rho_j = \xi^{\bbar}$, etc., we can compute
	\begin{align}
	\frac{(\psi^{\bbar\alpha} - \xi^{\bbar} \xi^{\alpha}) \rho_{\alpha}}{\rho_w}
	=
	\frac{\xi^{\bbar} - \psi^{\bbar w}\rho_w - \xi^{\bbar}(1-\rho_w \xi^{w})}{\rho_w}
	=
	\xi^{\bbar} \xi^{w} - \psi^{\bbar w},
	\end{align}
	Similarly,
	\begin{align}
	\frac{(\psi^{\bbar\alpha} - \xi^{\bbar}\xi^{\alpha}) \rho_{\bbar}}{\rho_{\wbar}}
	=
	\xi^{\alpha} \xi^{\wbar} - \psi^{\alpha \wbar},
	\end{align}
	and
	\begin{align}
	\frac{(\psi^{\bbar\alpha} - \xi^{\bbar} \xi^{\alpha}) \rho_{\bbar}\rho_{\alpha}}{|\rho_{\wbar}|^2}=
	\xi^{w} \xi^{\wbar} - \psi^{\wbar w}.
	\end{align}
	We obtain,
	\begin{equation}
	-h^{\bbar \alpha} D^{\rho}_{\alpha\bbar}
	=
	(\xi^{j} \xi^{\kbar} - \psi^{\kbar j}) \partial_{j}\partial_{\kbar} = \widetilde{\Delta}_{\rho} = \tfrac{1}{2}\Delta_b - n N_{\rho}.
	\end{equation}
	Therefore, by \eqref{e:ricci}
	\begin{align*}
	R_{\theta}
	= 
	R_{\alpha}{}^{\alpha}
	& = 
	-h^{\bbar\alpha}D^{\rho}_{\alpha\bbar} \log J[\rho] + n(n+1) r[\rho] \notag \\
	& = 
	n(n+1)r[\rho] - nN_{\rho} \log J[\rho] + \tfrac{1}{2} \Delta_b \log J[\rho]. \qedhere
	\end{align*}
\end{proof}
\begin{proof}[Proof of Theorem~\ref{thm:lb}]
	Let $\rho$ be any defining function satisfying $J[\rho] >0$. Let $\tilde{\rho}$ be the ``second approximation'':
	\begin{equation}
	\tilde{\rho} = J[\rho]^{-1/(n+1)} e^{-B(z)}\rho (z),
	\end{equation}
	where
	\begin{equation}
	B(z)
	:=
	\frac{1}{2n(n+1)} \trace (H(-\log(-\rho)))^{-1} H(\log J[\rho]).
	\end{equation}
	Then $J[\tilde{\rho}] = 1 + O(\tilde{\rho}^2)$ and $B(z) = 0$ on $M$ (see \cite{L16}). The unique volume-normalized structure is given by
	\begin{equation}
	\Theta: = (i/2)(\bar{\partial}\tilde{\rho}-\partial\tilde{\rho})
	\end{equation}
	Since $B(z) = 0$ on $M$, we have
	\begin{equation}
	{\Theta} = e^{u} \theta, \quad u = -\frac{\log J[\rho]}{n+1} .
	\end{equation}
	On the other hand, if $R_{\Theta}$ is the Webster scalar curvature of $\Theta$, then by Lee's formula \cite{L}
	\begin{equation}
	e^{u}R_{\Theta} = R+ (n+1)\Delta_b u - n(n+1) |\partial_b u|^2.
	\end{equation}
	By Proposition~\ref{prop:wsc},
	\begin{align}
	e^u 	R_{\Theta}
	& = 
	R - \Delta_b \log J[\rho] - \frac{n}{n+1} |\partial_b \log J[\rho]|^2 \notag \\
	& = 
	n(n+1)r[\rho] - nN_{\rho} \log J[\rho] - \tfrac{1}{2}\Delta_b \log J[\rho] - \frac{n}{n+1} |\partial_b \log J[\rho]|^2 \notag \\
	& = 
	D[\rho].
	\end{align}
	Finally, observe that $e^{u} = J[\rho]^{-1/(n+1)}$, hence
	\begin{equation*}
	R_{\Theta}
	=
	J[\rho]^{1/(n+1)} D[\rho]. \qedhere
	\end{equation*}
\end{proof}
\begin{proof}[Proof of Corollary 1.4]
	Let $\Theta$ be the unique volume-normalized pseudohermitian structure on $M$. It is well-known that $\Theta$ is pseudo-Einstein and so
	\begin{equation}
	R_{\alpha\bbar} = (R/n)h_{\alpha\bbar} \geq \min (R/n)\, h_{\alpha\bbar}.
	\end{equation}
	Therefore, by Chanillo-Chiu-Yang estimate \cite{CCY},
	\begin{equation}
	\lambda_1 \geq \min R/(n+1)
	=
	\frac{1}{n+1}J[\rho]^{1/(n+1)} D[\rho].
	\end{equation}
	By \cite{LSW}, if $n\geq 2$, the equality occurs if and only if $M$ is the sphere. 
	
	If $\rho$ is a second approximate solution, i.e., $J[\rho] = 1 + O(\rho^2)$, then 
	\begin{equation}
	R_{\Theta}
	=
	D[\rho]
	=
	n(n+1)\det H[\rho].
	\end{equation}
	Therefore, in this case $\lambda_1 \geq n\min_M \det H[\rho]$. The proof is complete.
\end{proof}
\begin{remark}\rm In $\mathbb{C}^2$, our formula \eqref{e:scal} for $R_{\Theta}$ is related to Hammond's in \cite{Hammond}. To see this, let $M\subset \mathbb{C}^2$ be
	a strictly pseudoconvex real hypersurface and $p\in M$. Suppose that near
	$p$, there is a holomorphic coordinates $(z,w)$ centered at the origin such that $\rho$ has an expansion of the form
	\begin{equation}
	\rho = -\Im w + |z|^2 + \kappa |z|^4 + \gamma z \zbar ^3 + \gamma z^3 \zbar + \cdots , 
	\end{equation}
	where $\kappa$ and $\gamma$ are real. This is the volume-preserving normal form of Hammond \cite{Hammond}.
	One can calculate $D[\rho](0)=4\kappa$ and $J[\rho](0) = \frac{1}{4}$, and therefore, the Webster scalar curvature at the origin is
	\begin{equation}
	R_{\Theta}(0) = J[\rho]^{1/3} D[\rho](0) = 2\sqrt[3]{2}\, \kappa.
	\end{equation}
	This agrees with Hammond's result, except that the constant is different from the one in \cite{Hammond} due to a different normalization. 
\end{remark}

\end{document}